\newtheorem{theorem}{Theorem}[section]
\newtheorem{lemma}[theorem]{Lemma}
\newtheorem{cor}[theorem]{Corollary}
\newtheorem{prop}[theorem]{Proposition}
\theoremstyle{definition}
\newtheorem{example}[theorem]{Example}
\newtheorem{rem}[theorem]{Remark}
\theoremstyle{remark}
\newcommand{\NN}{\mathbb{N}}
\newcommand{\RR}{\mathbb{R}}
\numberwithin{equation}{section}
\begin{document}

\title[Moment problem for symmetric algebras of lc spaces]{Moment problem for symmetric algebras of locally convex spaces}

\author[M. Ghasemi, M. Infusino, S. Kuhlmann, M. Marshall]{M. Ghasemi$^{*}$, M. Infusino$^{\dagger}$, S. Kuhlmann$^{\dagger}$, \fbox{ M. Marshall$^{*}$}}
\address{$^{*}$ Department of Mathematics and Statistics,\newline \indent
University of Saskatchewan,\newline \indent
Saskatoon, SK. S7N 5E6, Canada}
\email{mehdi.ghasemi@usask.ca}
\address{$^{\dagger}$Fachbereich Mathematik und Statistik,\newline \indent
Universit\"at Konstanz,\newline \indent
78457 Konstanz, Germany.}
\email{maria.infusino@uni-konstanz.de, salma.kuhlmann@uni-konstanz.de}
\keywords{moment problem; normed algebras; lmc algebras; symmetric algebras.}
\subjclass[2010]{Primary 44A60 Secondary 14P99}

\begin{abstract} It is explained how a locally convex (lc) topology $\tau$ on a real vector space $V$ extends to a locally multiplicatively convex (lmc) topology $\overline{\tau}$ on the symmetric algebra $S(V)$. This allows the application of the results on lmc topological algebras obtained by Ghasemi, Kuhlmann and Marshall to obtain representations of $\overline{\tau}$-continuous linear functionals $L: S(V)\rightarrow \mathbb{R}$ satisfying $L(\sum S(V)^{2d}) \subseteq [0,\infty)$ (more generally, $L(M) \subseteq [0,\infty)$ for some $2d$-power module $M$ of $S(V)$) as integrals with respect to uniquely determined Radon measures $\mu$ supported by special sorts of closed balls in the dual space of $V$. The result is simultaneously more general and less general than the corresponding result of Berezansky, Kondratiev and \v Sifrin. It is more general because $V$ can be any lc topological space (not just a separable nuclear space), the result holds for arbitrary $2d$-powers (not just squares), and no assumptions of quasi-analyticity are required. It is less general because it is necessary to assume that $L : S(V) \rightarrow \mathbb{R}$ is $\overline{\tau}$-continuous (not just continuous on each homogeneous part of $S(V)$).
\end{abstract}

\maketitle
\vspace{-0.5cm}
\noindent\footnotesize{\it Murray Marshall passed away in May 2015. He worked on this manuscript together with us until the very last days of his life. We lost a collaborator of many years and a wonderful friend. We sorely miss him. (M.~Ghasemi, M.~Infusino, S.~Kuhlmann)}\normalsize

\section{Introduction}
For $n\ge 1$, $\mathbb{R}[\underline{x}]$ denotes the polynomial
ring $\mathbb{R}[\underline{x}]:=\mathbb{R}[x_1,\dots,x_n]$. The
multidimensional moment problem is the following. Given a linear
functional $L : \mathbb{R}[\underline{x}] \rightarrow \mathbb{R}$
and a closed subset $Y$ of $\mathbb{R}^n$ one wants to know when
there exists a nonnegative Radon measure $\mu$ on $\mathbb{R}^n$ supported on
$Y$ such that $L(f) = \int_Y f d\mu$, $\forall f\in
\mathbb{R}[\underline{x}]$.

In this paper, we continue to study this problem in the following
more general set up. Let $A$ be a commutative ring with 1 which is
an $\mathbb{R}$-algebra. $X(A)$ denotes the \it character space \rm
of $A$, i.e., the set of all ring homomorphisms (that send 1 to~1)
$\alpha : A \rightarrow \mathbb{R}$. For any $a \in A$, $\hat{a} : X(A)
\rightarrow \mathbb{R}$ is defined by $\hat{a}(\alpha) = \alpha(a)$.
The only ring homomorphism from $\mathbb{R}$ to itself is the
identity. $X(A)$ is given the coarsest topology such that the
functions $\hat{a}$, $a \in A$, are continuous. For a topological
space~$X$, $C(X)$ denotes the ring of all continuous functions from
$X$ to $\mathbb{R}$. The mapping $a \mapsto \hat{a}$ defines a ring
homomorphism from $A$ into C$(X(A))$. Let $d$ be an integer with $d\ge
1$. By a \it $2d$-power module \rm of $A$ we mean a subset $M$ of
$A$ satisfying $1 \in M, \ M+M \subseteq M \text{ and } a^{2d}M
\subseteq~M \text{ for each } a \in A.$ A \it $2d$-power preordering
\rm of~$A$ is a $2d$-power module of $A$ which is also closed under
multiplication. In the case $d=1$, $2d$-power modules (resp., $2d$-power
preorderings) are referred to as \it quadratic modules \rm (resp.,
\it quadratic preorderings\rm). For a subset $Y$ of $X(A)$,
$\operatorname{Pos}(Y) := \{ a \in A : \hat{a} \ge 0 \text{ on }
Y\}$ is a quadratic preordering of $A$. We denote by $\sum A^{2d}$
the set of all finite sums $\sum a_i^{2d}$, $a_i \in A$. $\sum
A^{2d}$ is the unique smallest $2d$-power module of $A$. $\sum
A^{2d}$ is closed under multiplication, so $\sum A^{2d}$ is also the
unique smallest $2d$-power preordering of $A$. For an arbitrary family $\{g_j\}_{j\in J}$ of elements in $A$ (note that $J$ is an arbitrary index set possibly uncountable), the $2d$-power module of $A$ generated by $\{g_j\}_{j\in J}$ is $M:= \{\sigma_0+\sigma_1g_{j_1}+\ldots+\sigma_sg_{j_s} : s\in\NN, j_1,\ldots,j_s\in J, \sigma_0,\ldots,\sigma_s\in\sum A^{2d}\}.$ For any subset $M$ of  $A$, $X_M:= \{ \alpha \in X(A) :
\hat{a}(\alpha)\ge 0 \ \forall a\in M\}$. If $M= \sum A^{2d}$ then
$X_M = X(A)$. If $M$ is the $2d$-power module of $A$ generated by
$\{g_j\}_{j\in J}$ then $X_M:= \{ \alpha \in X(A) :
\hat{g_j}(\alpha)\ge 0,\, \forall\, j\in J\}.$

A linear functional $L\!:A\!\to\!\mathbb{R}$ is said to be
{\it $M-$positive}, for some $2d-$power module $M$ of $A$, if $L(M)\subseteq
[0,\infty)$. For a linear functional $L:A\to\mathbb{R}$,
one can consider the set of \emph{representing measures}, i.e.,\! the set of all nonnegative Radon measures $\mu$ on $X(A)$ such that
$L(a) = \int_{X(A)} \hat{a} d\mu$ $\forall a\in A$. The {\it moment
problem} in this general setting is to understand the set of
representing measures for a given linear $L : A \rightarrow
\mathbb{R}$. In particular, one wants to know if this set is non-empty
and in case it is non-empty, when it is a singleton set. We note
that the moment problem for $\mathbb{R}[\underline{x}]$ is a special
case. Indeed, ring homomorphisms from $\mathbb{R}[\underline{x}]$ to
$\RR$ correspond to point evaluations $f \mapsto f(\alpha)$, $\alpha
\in \RR^n$ and $X(\mathbb{R}[\underline{x}])$ is identified (as a
topological space) with~$\mathbb{R}^n$.

This general setting includes several interesting instances of the moment problems appearing in applied fields (e.g., statistical mechanics, quantum field theory, spatial statistics, stochastic geometry, etc.), which cannot be reduced to the classical setting because of their intrinsic infinite dimensionality. For more details about those applications see, e.g., \cite{IKR} and \cite{IK}.

Several works have been devoted to the theoretical investigation of moment problems belonging to the general setting introduced above. In \cite{GKM2} and \cite{AJK} the general moment problem for the algebra of polynomials in an arbitrary set of variables
$\{x_i;\> i \in \Omega\}$ is studied. Many recent papers deal with the
general moment problem where the linear functional in question is
continuous for a certain topology. For instance, \cite{MGES} and \cite{L} deal with linear functionals continuous with respect to
weighted norm topologies, generalizing \cite{BCR} and \cite{Schmu78}. In \cite{MGHAS},
\cite{GKM} and \cite{GMW} the authors analyze integral representations of
linear functionals that are continuous with respect to locally
multiplicatively convex topologies. \cite[Theorem 2.1]{BK},
\cite{BS}, \cite{Bor-Yng75}, \cite{Heg75}, \cite[Section 12.5]{Schmu90}, \cite{I}, \cite{IKR} consider linear
functionals on the symmetric algebra of a nuclear space under certain quasi-analyticity assumptions which are less restrictive than continuity. These papers are precursors of the present one, which provides the following criterion to solve the moment problem for linear functionals on the symmetric algebra of a locally convex space.

\begin{theorem}\label{thm-sym} Let $\tau$ be a locally convex topology on an $\mathbb{R}$-vector space $V$ defined by a directed family $\mathcal{S}$ of seminorms on $V$ and let $\bar{\tau}$ be the finest locally multiplicatively
convex topology on the symmetric algebra $S(V)$ extending
$\tau$. Given an integer $d\ge 1$ and a
$2d-$power module $M$ of $S(V)$, a linear functional $L : S(V)\rightarrow
\mathbb{R}$ is $\overline{\tau}$-continuous and $M-$positive if and only if there exists a nonnegative Radon measure defined on the algebraic dual $V^*$ of $V$ representing~$L$ and supported 
by $X_M\cap \overline{B}_i(\rho')$ for some $\rho \in \mathcal{S}$ and some
integer $i\ge 1$.  Here, $\overline{B}_i(\rho')$ denotes the closed ball of radius $i$ centered at the origin in $V^*$ endowed with the operator norm $\rho'$.
\end{theorem}

This is the main result of this paper and will be restated in Section \ref{Sec:lmc} as Corollary \ref{sym}. We derive it from the following identity of cones (see~\cite{GKM})
\begin{equation}\label{id-cones}\overline{M}^{\omega} = \operatorname{Pos}(X_M \cap
\mathfrak{sp}(\omega))
\end{equation}
where $M$ is any $2d-$power module of a locally multiplicatively convex algebra $(A, \omega)$ and $\mathfrak{sp}(\omega)$ is its Gelfand spectrum (i.e., the set of all $\omega$-continuous characters of $A$).
 To this end, the first step is to show how the topology of a real locally convex space $(V, \tau)$ extends to a locally multiplicatively convex topology $\overline{\tau}$ on  the symmetric algebra $S(V)$. The second step is to apply~\eqref{id-cones} for $(A, \omega)=(S(V), \overline{\tau})$. The third and last step consists in getting an explicit description of the Gelfand spectrum $\mathfrak{sp}(\overline{\tau})$ of $S(V)$ w.r.t. $\overline{\tau}$, which in turn provides the support of the representing measure in Theorem \ref{thm-sym}.

For the ease of presentation, we first apply this strategy to the simplest case of locally convex topology, i.e.,\! when it is induced by a single seminorm. Therefore, 
we start by recalling in Section~2 some terminology and notation about seminorms on real vector spaces and results from \cite{GKM} needed to finally get~\eqref{id-cones}, see Theorems \ref{continuity assumption}, \ref{cor jacobi} and \ref{cor ca}). 
In Section~3 we introduce our set up for the graded symmetric algebra
$S(V)$ of a real vector space $V$. Starting with a seminorm $\rho$
on $V$, we here  define its projective extension $\overline{\rho}$ to
$S(V)$ and show that it is submultiplicative, see Proposition~\ref{extension}. We proceed to describe the character space of
$S(V)$ and its Gelfand spectrum with respect to $\overline{\rho}$,
see Proposition \ref{what is sp?}. At the end of that section we are already
in the position to apply the main results of \cite{GKM} to this
situation and get a version of Theorem~\ref{thm-sym} for the case of a seminormed space $(V, \rho)$ (see Corollary \ref{sym0}). It is interesting to point out
that this set up differs from the general case studied in
\cite{GKM2}. Here the free $\mathbb{R}$-algebra $\mathbb{R}[x_i, i\in\Omega]$
is endowed with a topology (i.e., studied as a topological real
algebra) and the linear functionals under consideration are assumed
to be continuous. In Section~4 we continue the exposition of Section 2 but for families of seminorms. Section 5 generalizes the results of Section~3 to the case when $V$ is endowed with a locally convex topology $\tau$ and so contains our main result Theorem \ref{thm-sym} (there Corollary~\ref{sym}). In Section~6 we compare this theorem to the results of \cite{BK}, \cite{BS}, \cite{I} and \cite{IKR}, pointing out the crucial role played by the continuity and the quasi-analyticity assumptions in the localization of the support of the representing measure (see
Remark~\ref{final}). 

\section{Background results}
A $2d-$power module $M$ in $A$ is said to be \it archimedean \rm if
for each $a \in A$ there exists an integer $k$ such that $k \pm a
\in M$. If $M$ is a $2d$-power module of $A$ which is archimedean
then $X_M$ is compact. The converse is false in general (see \cite[\S7.3]{M} or \cite[Example 4.6]{J-P}).

For simplicity, we assume from now on that $A$ is an $\mathbb{R}$-algebra. Let us report here some results from \cite{GKM} about seminormed real algebras which will be used in the forthcoming sections. Recall that a \it seminorm \rm on a $\mathbb{R}$-vector space $V$  is a map $\rho : V \rightarrow [0,\infty)$ such that

\indent
(1) $\forall$ $a\in V$ and $\forall$ $r\in\mathbb{R}$, $\rho(ra)=|r|\rho(a)$, and \newline
\indent
(2) $\forall$ $a,b \in V$, $\rho(a+b)\le\rho(a)+\rho(b)$. \newline
\noindent
A \textit{submultiplicative seminorm} on an $\mathbb{R}$-algebra $A$ is a seminorm $\rho$ on $A$ such that the following holds

\indent
(3) $\forall$ $a,b\in A$, $\rho(ab)\leq\rho(a)\rho(b)$.

\noindent
Let $\rho$ be a submultiplicative seminorm of an $\mathbb{R}$-algebra $A$. Note that if $\rho$ is not identically zero then $\rho(1)\ge 1$.
The \it Gelfand spectrum \rm of $\rho$ is
\begin{align*}
\mathfrak{sp}(\rho) :=& \{ \alpha \in X(A): \alpha \text{ is } \rho\text{-continuous}\}\\ =& \{ \alpha \in X(A) : |\alpha(a)|\le \rho(a) \text{ for all } a\in A\}.
\end{align*}
See \cite[Lemma 3.2]{GKM}.
As explained in \cite[Corollary 3.3]{GKM}, $\mathfrak{sp}(\rho)$ is compact.
\begin{theorem}\label{continuity assumption} If $\rho$ is a submultiplicative seminorm on $A$ and $M$ is a $2d$-power module of $A$ then

(1) $\overline{M}^{\rho} = \operatorname{Pos}(X_M \cap  \mathfrak{sp}(\rho))$ and

(2) $X_{\overline{M}^{\rho}} = X_M\cap \mathfrak{sp}(\rho)$.

\noindent
In particular,  $\overline{\sum A^{2d}}^{\rho} = \operatorname{Pos}(\mathfrak{sp}(\rho))$ and $X_{\overline{\sum A^{2d}}^{\rho}} = \mathfrak{sp}(\rho)$. Here, $\overline{M}^{\rho}$ denotes the closure of $M$ with respect to the seminorm $\rho$.
\end{theorem}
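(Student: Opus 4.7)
The plan is to reduce both parts to T.~Jacobi's theorem (Theorem~\ref{jacobi}) by establishing that the closed $2d$-power module $\overline{M}^{\rho}$ is itself archimedean in $A$. The trivial inclusion $\overline{M}^{\rho} \subseteq \operatorname{Pos}(X_M \cap \mathfrak{sp}(\rho))$ follows immediately from the definition of $\mathfrak{sp}(\rho)$: each $\alpha \in X_M \cap \mathfrak{sp}(\rho)$ is a $\rho$-continuous character that is non-negative on $M$, hence it extends by $\rho$-continuity to take $\overline{M}^{\rho}$ into $[0,\infty)$. By the same argument, $X_M \cap \mathfrak{sp}(\rho) \subseteq X_{\overline{M}^{\rho}}$, giving one half of (2). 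The work is all in the two opposite inclusions.

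The technical heart is the following archimedean claim: for each $a \in A$ and each $k > \rho(a)^{2d}$, the element $k - a^{2d}$ lies in $\overline{\sum A^{2d}}^{\rho} \subseteq \overline{M}^{\rho}$. The natural approach is to pass to the Banach-algebra completion $\hat{A}$ of $A/\ker(\rho)$, where $\rho$ becomes a genuine submultiplicative norm. After rescaling by $k^{1/(2d)}$ it is enough to treat the case $\rho(a) < 1$ and show $1 - a^{2d} \in \overline{\sum A^{2d}}^{\rho}$. For this one writes $1 - a^{2d} = b^{2d}$ with
\[
b := \sum_{n \ge 0} \binom{1/(2d)}{n} (-a^{2d})^{n} \in \hat{A},
\]
the binomial series converging in $\rho$-norm because $\rho(a^{2d}) \le \rho(a)^{2d} < 1$. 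Each partial sum of $b$ is a polynomial in $a$, hence lies in $A$, and the $2d$-th powers of these partial sums $\rho$-approximate $1 - a^{2d}$; so $1 - a^{2d}$ is a $\rho$-limit of elements of $\sum A^{2d}$. Combined with $N \cdot 1 \in \sum A^{2d}$ and the module structure, this yields for every $a \in A$ an integer $N$ with $N \pm a \in \overline{M}^{\rho}$, so $\overline{M}^{\rho}$ is archimedean. I expect this Banach-algebra computation with the $2d$-th root series to be the main obstacle: the case $d=1$ simplifies via products of square roots, but the general $2d$-case requires verifying convergence and staying within the image of $A$.

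Granted the archimedean property, Jacobi's theorem applied to $\overline{M}^{\rho}$ says that $\hat{a} \ge 0$ on $X_{\overline{M}^{\rho}}$ is equivalent to $a + \epsilon \in \overline{M}^{\rho}$ for every real $\epsilon > 0$; letting $\epsilon \to 0$ in $\rho$ and using that $\overline{M}^{\rho}$ is $\rho$-closed, we deduce $\overline{M}^{\rho} = \operatorname{Pos}(X_{\overline{M}^{\rho}})$. It remains to identify $X_{\overline{M}^{\rho}}$ with $X_M \cap \mathfrak{sp}(\rho)$. One inclusion was noted in the first paragraph. For the forward inclusion, given $\alpha \in X_{\overline{M}^{\rho}}$, containment $\alpha \in X_M$ is immediate from $M \subseteq \overline{M}^{\rho}$, and $\rho$-continuity of $\alpha$ follows by applying $\alpha$ to the elements $k - a^{2d} \in \overline{M}^{\rho}$ for all $k > \rho(a)^{2d}$: this forces $\alpha(a)^{2d} \le \rho(a)^{2d}$ and hence $|\alpha(a)| \le \rho(a)$. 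Putting the two pieces together yields $\overline{M}^{\rho} = \operatorname{Pos}(X_M \cap \mathfrak{sp}(\rho))$ and $X_{\overline{M}^{\rho}} = X_M \cap \mathfrak{sp}(\rho)$, as claimed; the two stated consequences for $M = \sum A^{2d}$ are the special case $X_M = X(A)$.
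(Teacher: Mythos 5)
The paper offers no argument of its own here---it simply cites \cite[Theorem 3.7]{GKM}---and your proposal is essentially a reconstruction of that cited proof: show that $\overline{M}^{\rho}$ is an archimedean $2d$-power module via a binomial-series computation in the Banach-algebra completion of $A/\ker\rho$, apply Jacobi's theorem (Theorem~\ref{jacobi}) to it, and identify $X_{\overline{M}^{\rho}}$ with $X_M\cap\mathfrak{sp}(\rho)$. Almost all of your steps check out: the two easy inclusions; the convergence of $\sum_n\binom{1/(2d)}{n}(-a^{2d})^n$ when $\rho(a)<1$ together with the observation that the $2d$-th powers of its partial sums lie in $\sum A^{2d}$ and $\rho$-converge to $1-a^{2d}$; the passage from Jacobi's conclusion $a+\epsilon\in\overline{M}^{\rho}$ for all $\epsilon>0$ to $a\in\overline{M}^{\rho}$ using $\rho$-closedness; and the use of the elements $k-a^{2d}\in\overline{M}^{\rho}$ to force $|\alpha(a)|\le\rho(a)$ for every $\alpha\in X_{\overline{M}^{\rho}}$.

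The one step you under-justify is the deduction of archimedeanity from $k-a^{2d}\in\overline{M}^{\rho}$. ``The module structure'' alone does not convert bounds on $a^{2d}$ into bounds $N\pm a\in\overline{M}^{\rho}$: for $d=1$ this needs the identity $\pm 2a=(a\pm1)^2-a^2-1$, and for general $d$ the analogous finite-difference (polarization) identity expressing $(2d)!\,a$ plus an integer as a $\pm$-combination of the powers $(a+j)^{2d}$, $0\le j\le 2d-1$, whose negatively-signed part must then be absorbed using the bounds $N_j-(a+j)^{2d}\in\overline{M}^{\rho}$ and the fact that nonnegative reals are $2d$-th powers. This is standard but not automatic, and it is exactly the place where the general $2d$-case is harder than $d=1$. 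Alternatively you can bypass it with your own tools: apply the same binomial-series argument directly to $1\pm a/k$ for $k>\rho(a)$ rather than to $1-(a/k^{1/(2d)})^{2d}$, obtaining $k\pm a=k\bigl((1\pm a/k)^{1/(2d)}\bigr)^{2d}\in\overline{\sum A^{2d}}^{\rho}$ in one stroke, which is in effect what the cited proof does. With that repair the argument is complete; you should also record the routine verification that $\overline{M}^{\rho}$ is again a $2d$-power module (closure of $M+M\subseteq M$ and $a^{2d}M\subseteq M$ under $\rho$-limits, using submultiplicativity), since that is what licenses applying Jacobi's theorem to it.
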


\begin{proof} See \cite[Theorem 3.7]{GKM}.
\end{proof}

Note that (2) in Theorem \ref {continuity assumption} can be deduced from (1) using the Stone-Weierstrass approximation theorem. Consequently, the main result here is (1) and an important element in its proof is the following representation theorem due to T. Jacobi \cite{J}.

\begin{theorem}\label{jacobi} Suppose $M$ is an archimedean $2d$-power module of $A$, $d\ge 1$. Then, for any $a \in A$, the following are equivalent:

(1) $\hat{a} \ge 0$ on $X_M$.

(2) $a+\epsilon \in M$  for all real $\epsilon >0$.
\end{theorem}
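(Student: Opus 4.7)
The easy direction (2)$\Rightarrow$(1) is immediate: for any $\alpha\in X_M$, the inclusion $a+\epsilon\in M$ gives $\hat{a}(\alpha)\ge-\epsilon$ for every $\epsilon>0$, whence $\hat{a}(\alpha)\ge 0$. The substance lies in the converse, which I would prove by contraposition: assuming $b:=a+\epsilon\notin M$ for some $\epsilon>0$, I would produce $\alpha\in X_M$ with $\alpha(b)\le 0$, giving $\hat{a}(\alpha)\le-\epsilon<0$ and contradicting (1).

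The plan is a Kadison--Dubois-style construction, adapted by Jacobi from quadratic modules to the $2d$-power setting. First, invoke Zorn's lemma on the family of $2d$-power modules containing $M$ and avoiding $b$ (nonempty, and closed under unions of chains) to produce a maximal such $P$, which inherits archimedeanness from $M$. Second, extract from the maximality the dichotomy that for every $c\in A$ either $c\in P$ or $-c\in P$: otherwise, maximality forces identities $b=p_1+\sigma c=p_2-\tau c$ with $p_i\in P$ and $\sigma,\tau\in\sum A^{2d}$, and eliminating $c$ yields $(\sigma+\tau)b\in P$; the archimedean control on $\sigma+\tau$ then drives $b$ itself into $P$, contradicting $b\notin P$. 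Third, set
$$\alpha(c):=\inf\{r\in\mathbb{Q}:r-c\in P\},$$
which is a finite real number since $P$ is archimedean and proper (no negative rational lies in $P$: otherwise $-1\in P$, and an archimedean witness $k+b\in M$ would give $b=(k+b)+(-k)\in P$). From this together with the dichotomy one reads off additivity, $\mathbb{R}$-linearity, and the sign conditions $\alpha|_M\ge 0$ and $\alpha(b)\le 0$ (the latter because $b\notin P$ forces $-b\in P$, so $r-b\in P$ for every positive rational $r$).

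The main obstacle is the final step, promoting $\alpha$ from a positive $\mathbb{R}$-linear functional to a character by proving multiplicativity. For quadratic modules ($d=1$) this is the classical Kadison--Dubois argument, powered by the polarization identity $4cd=(c+d)^2-(c-d)^2$, which expresses products through squares already controlled by $P$. No analogous algebraic identity is available for general $d$, and Jacobi's contribution is to supply a substitute: one uses the archimedean order induced by $P$ to approximate each $c$ by the rational $\alpha(c)$ so that $(c-\alpha(c))^{2d}\in P$ becomes correspondingly small, invokes the $a^{2d}$-closure of $P$ to control cross-terms, and passes to the limit to obtain $\alpha(cd)=\alpha(c)\alpha(d)$. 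Once multiplicativity is in hand, $\alpha\in X_M$ and $\alpha(b)\le 0$ close the contradiction.
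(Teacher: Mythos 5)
First, a point of reference: the paper does not prove this statement at all. It records it as Jacobi's representation theorem, notes that (2)$\Rightarrow$(1) is trivial, and refers the reader to \cite{J}, to \cite{M} for a short proof when $d=1$, and to \cite{GMW} for a short proof in the general case. Your outline follows exactly the architecture of those cited proofs (Zorn's lemma to get a maximal $2d$-power module $P\supseteq M$ avoiding $b$, the dichotomy $P\cup -P=A$, the functional $\alpha(c)=\inf\{r\in\mathbb{Q}\,:\,r-c\in P\}$, then multiplicativity), so there is no divergence of method to report; the only question is whether your sketch actually constitutes a proof.

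It does not, because the two decisive steps are asserted rather than established. (i) In the dichotomy step you write that $(\sigma+\tau)b\in P$ together with ``archimedean control on $\sigma+\tau$'' drives $b$ into $P$. Taken literally, the implication ``$ub\in P$, $u\in\sum A^{2d}$, $P$ archimedean $\Rightarrow b\in P$'' is false ($u=0$ is a counterexample), so you must be exploiting the specific provenance of $\sigma$ and $\tau$ from the two representations of $b$; making that work is precisely the content of a nontrivial lemma in Jacobi's argument (and in \cite{GMW}), proved by a careful identity/iteration, not by a one-line appeal to archimedeanness. (ii) The multiplicativity of $\alpha$, which you rightly identify as the crux for $d>1$ where polarization is unavailable, is described only impressionistically (``control cross-terms, pass to the limit''). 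This is where essentially all of the work in \cite{J} and \cite{GMW} lives --- one must in effect show that the infinitesimals with respect to $P$ form an ideal, or carry out Jacobi's $2d$-power estimates in detail --- and nothing in the sketch indicates how the cross-terms are actually bounded. In short, the proposal is a faithful road map of the standard proof, but at the two points where the theorem is genuinely hard it substitutes a description of what must be done for an argument that does it.
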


The implication (2)$\Rightarrow$(1) is trivial, while  (1)$\Rightarrow$(2) is non-trivial. See \cite{BS-KD}, \cite{K} and \cite{P} for early versions of Jacobi's theorem. See \cite{M} for a short proof in the case $d=1$. See~\cite{GMW} for a short proof in the general case. See~\cite{M0} for a generalization. 

As specified in the introduction, we are interested in the following general version of
the moment problem: Given a linear functional $L : A \rightarrow
\mathbb{R}$, what can be said about the set of nonnegative Radon
measures $\mu$ on $X(A)$ satisfying $L(a) = \int_{X(A)} \hat{a} d\mu$, 
$\forall a\in A$? In particular, one would like to know (i) when this
set is non-empty, and (ii) if it is non-empty, when it is singleton.
Also one wants to understand the support of $\mu$. We say that $\mu$
is supported by some Borel subset $Y$ of $X(A)$ if
$\mu(X(A)\backslash Y)=0$. If $\mu$ is supported by a Borel subset
$Y$ of $X(A)$ then obviously $L$ is
$\operatorname{Pos}(Y)$-positive.
Conversely, if $L$ is $M$-positive for some $2d$-power module $M$ of $A$, does this imply that $\mu$ is supported by $X_M$? One would also like to know for which $\mu$ and for which $p\ge 1$ the natural map $A \rightarrow \mathcal{L}^p(\mu)$, $a\mapsto\hat{a}$, has dense image. Recall if $(X,\mu)$ is a measure space and $f : X \rightarrow \mathbb{R}$ is a 
measurable function, then $$\| f\|_{p,\mu} := \left[\int |f|^p d\mu \right]^{1/p}, \ p\in [1,\infty).$$ The \it Lebesgue space \rm $\mathcal{L}^p(\mu)$, by definition, is the $\mathbb{R}$-vector space  $$\mathcal{L}^p(\mu) := \{ f : X \rightarrow \mathbb{R} : f \text{ is measurable and } \| f\|_{p,\mu} <\infty\}$$ equipped with the norm $\| \cdot \|_{p,\mu}$\footnote{In definition of $\mathcal{L}^p(\mu)$ we assume that each $f$ is a 
representative of the class of all functions $g:X\rightarrow\mathbb{R}$ such that $\|f-g\|_{p,\mu}=0$, otherwise, $\mathcal{L}^p(\mu)$ is merely a seminormed space.}.

Jacobi's result plays a key role for the moment problem in this general setting and indeed it allows to show the following result.
 
 \begin{theorem} \label{cor jacobi} Suppose $M$ is an archimedean $2d-$power module of $A$ and $L : A \rightarrow \mathbb{R}$ a $M-$positive linear functional. Then there exists a nonnegative Radon measure $\mu$ supported by $X_M$ such
that $L(a) = \int_{X_M} \hat{a} d\mu$ $\forall a\in A$.
\end{theorem}

\begin{proof} See  \cite[Corollary 2.6]{GMW}. The conclusion can be also obtained as a consequence of \cite[Theorem 5.5]{MGHAS}.
\end{proof}

Since $M$ is archimedean, $X_M$ is compact, so $\mu$ is the unique nonnegative Radon measure on $X(A)$ satisfying $L(a) = \int_{X(A)} \hat{a} d\mu$ $\forall a\in A$. Also, the image of $A$  in $\mathcal{L}^p(\mu)$ is dense $\forall p \in [1,\infty)$. These are all consequences of the following general result:

\begin{prop} Suppose $\mu$ is a nonnegative Radon measure on $X(A)$ having compact support. Then
\begin{enumerate}[(1)]
\item $\mu$ is determinate, i.e., if $\nu$ is any nonnegative Radon measure on $X(A)$ satisfying $\int_{X(A)} \hat{a} d\nu = \int_{X(A)} \hat{a} d\mu$ $\forall a\in A$, then $\nu = \mu$.
\item The image of $A$  in $\mathcal{L}^p(\mu)$ under the natural map is dense $\forall\,p \in [1,\infty)$.
\end{enumerate}
\end{prop}

\begin{proof} (1) See \cite[Lemma 3.9]{GKM}.
(2) Let $Y$ be a compact subset of $X(A)$ supporting the measure $\mu$. It suffices to show that the step functions $\sum_{j=1}^m r_j\chi_{S_j}$, $r_j \in \mathbb{R}$, $S_j \subseteq Y$ a Borel set, belong to the closure of the image of $A$. Using the triangle inequality we are reduced further to the case $m=1$, $r_1=1$. Let $S \subseteq Y$ be a Borel set. Choose $K$ compact, $U$ open in $Y$ such that $K \subseteq S \subseteq U$, $\mu(U\backslash K)< \epsilon$. By Urysohn's lemma there exists a continuous function $\phi : Y \rightarrow \mathbb{R}$ such that $0\le \phi \le 1$ on $Y$, $\phi = 1$ on $K$, $\phi = 0$ on $Y \backslash U$. Then $\| \chi_S-\phi\|_{p,\mu} \le \epsilon^{1/p}$. Use the Stone-Weierstrass approximation theorem \cite[Theorem 44.7]{W} to get $a\in A$ such that $|\phi(\alpha)-\hat{a}(\alpha)| <\epsilon$ for all $\alpha \in Y$. Then $\|\phi- \hat{a}\|_{p,\mu} \le \epsilon\mu(Y)^{1/p}$. Putting these things together yields $\| \chi_S-\hat{a}\|_{p,\mu} \le \|\chi_S-\phi\|_{p,\mu}+\|\phi - \hat{a}\|_{p,\mu} \le \epsilon^{1/p}+\epsilon\mu(Y)^{1/p}$.
\end{proof}

We conclude this section with a version of Theorem \ref{cor jacobi} which holds for not necessarily archimedean $2d-$power modules of real algebras endowed with a submultiplicative seminorm. This result is actually the dual version of Theorem \ref{continuity assumption} and will be a fundamental to get the main result of this paper.

\begin{theorem}  \label{cor ca}\ \\ For each submultiplicative seminorm $\rho$ on $A$ and each integer $d\ge 1$
there is a natural one-to-one correspondence $L \leftrightarrow \mu$
given by $L(a) = \int _{X(A)}\hat{a} d\mu$ $\forall$ $a\in A$ between
$\rho$-continuous linear functionals $L:~A~\rightarrow~\mathbb{R}$
satisfying $L(\sum A^{2d})\subseteq [0,\infty)$ and nonnegative Radon
measures $\mu$ on $X(A)$ supported by $\mathfrak{sp}(\rho)$. For any
$2d$-power module $M$ of $A$, if $L \leftrightarrow \mu$ under this
correspondence then $L$ is $M$-positive iff $\mu$ is supported by
$X_M\cap \mathfrak{sp}(\rho)$.
\end{theorem}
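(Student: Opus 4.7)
The plan is to combine Theorems \ref{continuity assumption} and \ref{cor jacobi} with the determinacy proposition, turning the $\rho$-continuity hypothesis into an archimedean $2d$-power module hypothesis to which Jacobi-type machinery applies.

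\textbf{Easy direction.} Given a positive Radon measure $\mu$ on $X(A)$ supported by $\mathfrak{sp}(\rho)$, set $L(a) := \int \hat a \, d\mu$. For $\alpha \in \mathfrak{sp}(\rho)$ one has $|\hat a(\alpha)| = |\alpha(a)| \le \rho(a)$, and $\mathfrak{sp}(\rho)$ is compact, so $\mu$ is finite and $|L(a)| \le \mu(\mathfrak{sp}(\rho))\,\rho(a)$; hence $L$ is $\rho$-continuous. The positivity condition $L(\sum A^{2d}) \subseteq [0,\infty)$ is immediate.

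\textbf{Existence and uniqueness of $\mu$ given $L$.} The key observation is that the $\rho$-closure $N := \overline{\sum A^{2d}}^{\rho}$ is, by Theorem \ref{continuity assumption}(1) with $M = \sum A^{2d}$, equal to $\operatorname{Pos}(\mathfrak{sp}(\rho))$. Compactness of $\mathfrak{sp}(\rho)$ makes every $\hat a$ bounded there, so $k \pm a \in N$ for some integer $k$ and $N$ is archimedean. Since $L$ is $\rho$-continuous and nonnegative on $\sum A^{2d}$, it is nonnegative on $N$. Theorem \ref{cor jacobi} then yields a positive Radon measure $\mu$ on $X_N = \mathfrak{sp}(\rho)$ (using Theorem \ref{continuity assumption}(2)) with $L(a) = \int \hat a\, d\mu$ for all $a \in A$. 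Since this $\mu$ has compact support, the determinacy part of the earlier proposition guarantees that $\mu$ is unique.

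\textbf{The support statement.} If $\mu$ is supported on $X_M \cap \mathfrak{sp}(\rho)$, then for $a \in M$ the integrand $\hat a \ge 0$ on the support, so $L$ is $M$-positive. Conversely, suppose $L$ is $M$-positive. Then $\rho$-continuity upgrades this to $L(\overline M^{\rho}) \subseteq [0,\infty)$, and Theorem \ref{continuity assumption}(1) identifies $\overline M^{\rho}$ with $\operatorname{Pos}(X_M \cap \mathfrak{sp}(\rho))$. The goal is to deduce that $\mu$ vanishes on $\mathfrak{sp}(\rho)\setminus X_M$.

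\textbf{Main obstacle.} This last implication is the one step that is not purely formal: I need to transfer the positivity conclusion from elements of $\operatorname{Pos}(X_M \cap \mathfrak{sp}(\rho))$ of the form $\hat a$ to arbitrary continuous functions on $\mathfrak{sp}(\rho)$ that are nonnegative on $X_M \cap \mathfrak{sp}(\rho)$. The plan is: for $f \in C(\mathfrak{sp}(\rho))$ with $f \ge 0$ on $X_M \cap \mathfrak{sp}(\rho)$ and $\epsilon > 0$, use Stone--Weierstrass to find $a \in A$ with $|\hat a - (f+\epsilon)| < \epsilon/2$ on $\mathfrak{sp}(\rho)$; then $\hat a > 0$ on $X_M \cap \mathfrak{sp}(\rho)$, so $a \in \overline M^{\rho}$ and $L(a) \ge 0$, whence letting $\epsilon \to 0$ gives $\int f\, d\mu \ge 0$. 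If $\mu$ placed positive mass outside $X_M$, inner regularity would give a compact $K \subseteq \mathfrak{sp}(\rho)\setminus X_M$ with $\mu(K) > 0$; Urysohn's lemma (applied to the disjoint closed subsets $K$ and $X_M \cap \mathfrak{sp}(\rho)$ of the compact Hausdorff space $\mathfrak{sp}(\rho)$) would provide $\phi \in C(\mathfrak{sp}(\rho))$ with $0 \le \phi \le 1$, $\phi = 1$ on $K$, and $\phi = 0$ on $X_M \cap \mathfrak{sp}(\rho)$. Then $-\phi \ge 0$ on $X_M \cap \mathfrak{sp}(\rho)$ forces $\int \phi\, d\mu \le 0$, contradicting $\int \phi\, d\mu \ge \mu(K) > 0$.
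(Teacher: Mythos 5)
Your proposal is correct, and it follows what is essentially the paper's own route: the paper proves Theorem \ref{cor ca} simply by citing \cite[Corollary 3.8 and Remark 3.10(i)]{GKM}, and your argument reconstructs that proof from exactly the ingredients the paper has already quoted, namely Theorem \ref{continuity assumption} (to see that $\overline{\sum A^{2d}}^{\rho}=\operatorname{Pos}(\mathfrak{sp}(\rho))$ is archimedean with $X$-set $\mathfrak{sp}(\rho)$), Theorem \ref{cor jacobi} (existence of $\mu$), and the determinacy/density proposition (uniqueness), with the Stone--Weierstrass/Urysohn argument for the support statement matching the technique the paper itself uses in the proof of that proposition. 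No gaps: the compactness of $\mathfrak{sp}(\rho)$ justifies both the archimedean property and the finiteness of $\mu$ needed in your $\epsilon$-estimates, and $X_M\cap\mathfrak{sp}(\rho)$ is indeed closed so Urysohn applies.
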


\begin{proof} See \cite[Corollary 3.8 and Remark 3.10(i)]{GKM}.
\end{proof}

Theorem \ref{cor ca} applies in some interesting cases. See \cite{BCR} and \cite{GMW} for the special case of semigroup algebras. See \cite[Section 4]{GKM} for the application to $*$-seminormed $*$-algebras.
See \cite[Section 5]{GKM} for the application to lmc topological $\mathbb{R}$-algebras. We recall the application to lmc topological $\mathbb{R}$-algebras in a bit more detail in Section 4.

\section{Submultiplicative seminorms on symmetric algebras}

Let $V$ be an $\mathbb{R}$-vector space. We denote by $S(V)$ the symmetric (tensor) algebra of~$V$, i.e., the tensor algebra $T(V)$ factored by the ideal generated by the elements $v\otimes w -w\otimes v$, $v,w\in V$. For notational convenience, given any two elements $f,g\in S(V)$ we denote by $fg$ their symmetric tensor product. If we fix a basis $x_i$, $i\in \Omega$ of $V$, then  $S(V)$ is identified with the polynomial ring $\mathbb{R}[x_i : i\in \Omega]$, i.e., the free $\mathbb{R}$-algebra in commuting variables $x_i$, $i\in \Omega$.
The algebra $S(V)$ is a graded algebra. Denote by $S(V)_k$ the $k$-th homogeneous part of $S(V)$, $k\ge 0$, i.e., the image of $k$-th homogeneous part $V^{\otimes k}$ of $T(V)$ under the canonical map $$\sum_{i=1}^n f_{i1}\otimes \cdots \otimes f_{ik} \mapsto \sum_{i=1}^n  f_{i1}\cdots f_{ik}.$$ Here, $f_{ij} \in V$ for  $i=1,\dots, n$, $j=1,\dots,k$ and $n\ge 1$. Note that $S(V)_0 = \mathbb{R}$ and $S(V)_1 = V$.

Suppose $(V_i,\rho_i)$ are seminormed $\mathbb{R}$-vector spaces, $i=1,2$. The \it projective tensor seminorm \rm $\rho_1 \otimes \rho_2$ on $V_1 \otimes V_2$ is defined by $$(\rho_1 \otimes \rho_2)(f) := \inf\{ \sum_{i=1}^n \rho_1(f_{i1})\rho_2(f_{i2}) : f = \sum_{i=1}^n f_{i1}\otimes f_{i2}, \ f_{ij} \in V_j, \ n \ge 1\}.$$ If $\rho_1$, $\rho_2$ are norms, then $\rho_1 \otimes \rho_2$ is a norm, usually called the \it projective tensor norm \rm or the \it projective cross norm. \rm
See  \cite[Chapter~1, Proposition~1]{Gr} for the proof. 
If $(V_i,\rho_i)$ are seminormed $\mathbb{R}$-vector spaces, $i=1,\dots,k$, then $\rho_1 \otimes \cdots \otimes \rho_k$ is defined recursively, i.e., \begin{align*}(\rho_1 \otimes \cdots &\otimes \rho_k)(f) := \\& \inf\{ \sum_{i=1}^n \rho_1(f_{i1}) \cdots \rho_k(f_{ik}) : f = \sum_{i=1}^n f_{i1}\otimes \cdots \otimes f_{ik}, \ f_{ij} \in V_j, \ n \ge 1\}.\end{align*} If all the $(V_i,\rho_i)$ are equal, say $(V_i,\rho_i) = (V,\rho)$, $i=1,\dots,k$, the associated projective tensor seminorm $\rho_1 \otimes \cdots \otimes \rho_k$ on $V^{\otimes k}$ will be denoted simply by~$\rho^{\otimes k}$.

Suppose now that $\rho$ is a seminorm on $V$ and $\pi_k : V^{\otimes k} \rightarrow S(V)_k$ is the canonical map.
For $k\ge 1$ define $\overline{\rho}_k$ to be the quotient seminorm on $S(V)_k$ induced by $\rho^{\otimes k}$, i.e.,  \begin{align*}\overline{\rho}_k(f) :=& \inf\{ \rho^{\otimes k}(g) : g \in V^{\otimes k}, \ \pi_k(g)=f\} \\ =&  \inf\{ \sum_{i=1}^n \rho(f_{i1})\cdots \rho(f_{ik}) : f = \sum_{i=1}^n f_{i1}\cdots f_{ik}, f_{ij} \in V, n \ge 1 \}.\end{align*}
Define $\overline{\rho}_0$ to be the usual absolute value on $\mathbb{R}$.

\begin{lemma}\label{lemma 1} 
If $k=i+j$, $f\in S(V)_i$, $g\in S(V)_j$, then  $\overline{\rho}_k(fg) \le \overline{\rho}_i(f)\overline{\rho}_j(g)$.
\end{lemma}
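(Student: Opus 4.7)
The plan is to unravel the definition of $\overline{\rho}_k$ directly: given optimal (up to $\varepsilon$) representations of $f$ and $g$ as sums of products of elements of $V$, their ``formal product'' yields a representation of $fg$ of length $i+j$ whose associated weight factors as a product.

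Concretely, assume first $i,j \ge 1$. Fix $\varepsilon > 0$. By the infimum definition of $\overline{\rho}_i$ and $\overline{\rho}_j$, choose representations
\[
f \;=\; \sum_{a=1}^{m} f_{a,1}\cdots f_{a,i}, \qquad g \;=\; \sum_{b=1}^{n} g_{b,1}\cdots g_{b,j}, \qquad f_{a,\ell}, g_{b,\ell}\in V,
\]
such that
\[
\sum_{a=1}^m \rho(f_{a,1})\cdots \rho(f_{a,i}) \;<\; \overline{\rho}_i(f) + \varepsilon, \qquad \sum_{b=1}^n \rho(g_{b,1})\cdots\rho(g_{b,j}) \;<\; \overline{\rho}_j(g) + \varepsilon.
\]
Multiplying in $S(V)$ (which is commutative) yields
\[
fg \;=\; \sum_{a,b} f_{a,1}\cdots f_{a,i}\, g_{b,1}\cdots g_{b,j},
\]
an expression of $fg\in S(V)_k$ as a sum of $mn$ products of $k=i+j$ elements of $V$. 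Plugging this representation into the definition of $\overline{\rho}_k$ gives
\[
\overline{\rho}_k(fg) \;\le\; \sum_{a,b} \rho(f_{a,1})\cdots\rho(f_{a,i})\,\rho(g_{b,1})\cdots\rho(g_{b,j}) \;=\; \Bigl(\sum_a \rho(f_{a,1})\cdots\rho(f_{a,i})\Bigr)\Bigl(\sum_b \rho(g_{b,1})\cdots\rho(g_{b,j})\Bigr),
\]
which is less than $(\overline{\rho}_i(f)+\varepsilon)(\overline{\rho}_j(g)+\varepsilon)$. Letting $\varepsilon \downarrow 0$ gives the desired inequality.

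The remaining cases $i=0$ or $j=0$ are handled separately and trivially: since $\overline{\rho}_0$ is the absolute value on $\mathbb{R}$, multiplication by a scalar $f\in S(V)_0=\mathbb{R}$ rescales any representation of $g$ by $|f|$, yielding equality $\overline{\rho}_j(fg)=|f|\,\overline{\rho}_j(g)$ by homogeneity of $\overline{\rho}_j$. There is really no main obstacle here: the proof is a direct manipulation of the infimum, and the only thing worth being careful about is indexing (the $i,j$ denoting degrees clash notationally with the dummy indices in the defining infimum, so I used $a,b,\ell$ for the latter).
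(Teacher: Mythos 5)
Your proof is correct and follows essentially the same route as the paper's: multiply near-optimal representations of $f$ and $g$ to get a representation of $fg$ whose weight factors, then pass to the infimum. The paper phrases this by taking the infimum over all representations directly rather than via an explicit $\varepsilon$, and it leaves the degree-zero cases implicit, but the argument is the same.
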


\begin{proof} 
Suppose $f\in S(V)_i$, $g\in S(V)_j$ and $f = \sum_p f_{p1}\cdots f_{pi}$, $g = \sum_q g_{q1}\cdots g_{qj}$. Then $fg = \sum_{p,q} f_{p1}\cdots f_{pi}g_{q1}\cdots g_{qj}$, so:

\begin{align*}
\overline{\rho}_k(fg) \le& \sum_{p,q} \rho(f_{p1})\cdots \rho(f_{pi}) \rho(g_{q1})\cdots \rho(g_{qj})\\ =& (\sum_p \rho(f_{p1})\cdots \rho(f_{pi}))(\sum_q \rho(g_{q1})\cdots \rho(g_{qj})).\end{align*} It follows that $\overline{\rho}_k(fg)\le \overline{\rho}_i(f)\overline{\rho}_j(g)$.
\end{proof}

We extend $\rho$ to a seminorm $\overline{\rho}$ on $S(V)$ as follows:
For $f= f_0+\dots + f_{\ell}$, $f_k \in S(V)_k$, $k=0,\dots,\ell$, define 
\begin{equation}\label{proj-ext}
\overline{\rho}(f) := \sum_{k=0}^{\ell} \overline{\rho}_k(f_k).
\end{equation}
We refer to $\overline{\rho}$ as \it the projective extension \rm of $\rho$ to $S(V)$. This extension plays a key role in the following since $\overline{\rho}$ has the fundamental property of being submultiplicative, see Proposition~\ref{extension}. In particular, the $\overline{\rho}-$continuity of a linear functional $L$ on $S(V)$ implies the boundedness of its moments (c.f. Remark \ref{final}-(9)) and so also encodes the boundedness of the support of any representing measure for $L$ (see Corollary \ref{sym0}).

\begin{rem}\label{rem-Fock}
When $(V, \rho)$ is a Hilbert space, i.e., $\rho$ is the norm induced by an inner product $\langle \cdot, \cdot\rangle$, we can clearly run the same algebraic construction of $S(V)=\oplus_{k=0}^\infty S(V)_k$. From a topological point of view, we can endow each $S(V)_k$ with the following inner product 
$$\langle f_1\cdots f_k, g_1\cdots g_k\rangle_k:=\prod_{i=1}^k\langle f_i, g_i\rangle,\quad\forall f_1,\ldots,f_k,g_1,\ldots,g_k\in V$$ 
and consider the completion $\overline{S(V)_k}$ of $(S(V)_k, \langle \cdot, \cdot \rangle_k)$. Denote by $\|\cdot\|_k$ the norm induced by $\langle\cdot, \cdot\rangle_k$ on $S(V)_k$. 

The space $F(V)$ of all $\sum_{k=0}^\infty f_k$ with $f_k\in \overline{S(V)_k}$ s.t. ${\sum_{k=0}^\infty\|f_k\|^2_k}<\infty$ is well-known as the \emph{symmetric (or bosonic) Fock space} over $V$ (see e.g. \cite{F}, \cite[Section II.4]{RS}). 
Then the inner product defined by $$\langle f, g\rangle:=\sum_{k=0}^\infty \langle f_k, g_k\rangle_k,\quad\forall f=\sum_{k=0}^\infty f_k, g=\sum_{k=0}^\infty g_k\in F(V)$$
makes $F(V)$ into a Hilbert space and induces the following norm $$\|f\|:=\sqrt{\sum_{k=0}^\infty \|f_k\|_k^2},\quad\forall f=\sum_{k=0}^\infty f_k\in F(V).$$

Note that $\|\cdot\|$ is an $\ell^2$-type norm constructed from the norms $\|\cdot\|_k$ on $S(V)_k$, while the projective extension $\overline{\rho}$ defined in \eqref{proj-ext} gives in this case an $\ell^1-$type norm constructed from the quotient norms $\overline{\rho}_k$ on $S(V)_k$. Although in this paper we do not consider the completion of $(S(V)_k, \overline{\rho}_k)$, we could have instead worked with it, since the linear functionals here under consideration are all continuous (c.f. Remark \ref{final}-(4)).
\end{rem}

\begin{prop}\label{extension} $\overline{\rho}$ is a submultiplicative seminorm on $S(V)$ extending the seminorm $\rho$ on $V$.
\end{prop}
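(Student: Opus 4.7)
The plan is to verify three things: that $\overline{\rho}$ is a seminorm on $S(V)$, that it restricts to $\rho$ on $V = S(V)_1$, and that it is submultiplicative. Since $S(V) = \bigoplus_{k\ge 0} S(V)_k$ as a real vector space, and $\overline{\rho}$ is defined on the direct sum by $\overline{\rho}(f)=\sum_{k=0}^{\ell}\overline{\rho}_k(f_k)$, the seminorm property will reduce to the seminorm property of each $\overline{\rho}_k$.

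For the seminorm and extension claims, I would argue as follows. Each $\overline{\rho}_k$ is already a seminorm on $S(V)_k$, being the quotient of the tensor seminorm $\rho^{\otimes k}$ (which is a seminorm on $V^{\otimes k}$) by the subspace $\ker \pi_k$. An $\ell^1$-sum of seminorms on the direct summands of a vector space is again a seminorm on the whole space, so $\overline{\rho}$ satisfies absolute homogeneity and the triangle inequality on $S(V)$. For the extension property, $\overline{\rho}_0$ is the absolute value on $\mathbb{R} = S(V)_0$ by definition, and for $v \in V = S(V)_1$ the tensor seminorm $\rho^{\otimes 1}$ coincides with $\rho$ and $\pi_1$ is the identity, so $\overline{\rho}(v)=\overline{\rho}_1(v)=\rho(v)$.

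The substantive step is submultiplicativity, and it is essentially a direct consequence of Lemma~\ref{lemma 1}. Writing $f = \sum_{i=0}^m f_i$, $g = \sum_{j=0}^n g_j$ with $f_i \in S(V)_i$, $g_j \in S(V)_j$, the $k$-th homogeneous component of $fg$ is $\sum_{i+j=k} f_i g_j$. Applying the triangle inequality for $\overline{\rho}_k$ followed by Lemma~\ref{lemma 1} and then rearranging a product of sums gives
\begin{align*}
\overline{\rho}(fg) &= \sum_{k=0}^{m+n}\overline{\rho}_k\Bigl(\sum_{i+j=k} f_i g_j\Bigr)
\le \sum_{k=0}^{m+n}\sum_{i+j=k}\overline{\rho}_i(f_i)\overline{\rho}_j(g_j) \\
&= \Bigl(\sum_{i=0}^m \overline{\rho}_i(f_i)\Bigr)\Bigl(\sum_{j=0}^n \overline{\rho}_j(g_j)\Bigr) = \overline{\rho}(f)\overline{\rho}(g).
\end{align*}
The only minor point of care is checking that Lemma~\ref{lemma 1} covers the edge cases $i=0$ or $j=0$, where multiplication in $S(V)$ reduces to scalar multiplication; there the inequality $\overline{\rho}_j(rg_j) = |r|\,\overline{\rho}_j(g_j)$ follows from $\overline{\rho}_j$ being a seminorm and $\overline{\rho}_0(r) = |r|$. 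There is no real obstacle in this proposition; the nontrivial ingredient, namely the behaviour of the tensor seminorm under passage to the symmetric quotient, has already been isolated in Lemma~\ref{lemma 1}.
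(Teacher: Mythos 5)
Your proposal is correct and follows essentially the same route as the paper: the seminorm and extension claims are immediate from the definitions, and submultiplicativity is obtained by exactly the same computation (triangle inequality for $\overline{\rho}_k$ on the homogeneous components of $fg$, then Lemma~\ref{lemma 1}, then factoring the double sum). Your extra remark about the $i=0$ or $j=0$ edge cases of Lemma~\ref{lemma 1} is a small point of care the paper passes over silently, but it does not change the argument.
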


\begin{proof} Clearly $\overline{\rho}$ is a seminorm on $S(V)$. Also, $\overline{\rho}_1 = \rho$, so $\overline{\rho}$ extends $\rho$. Let $f= \sum_{i=0}^m f_i$, $g= \sum_{j=0}^n g_j$, $f_i\in S(V)_i$, $g_j\in S(V)_j$. Then
\begin{align*}
\overline{\rho}(f g) =& \overline{\rho}(\sum_{i,j} f_i g_j) = \overline{\rho}(\sum_k\sum_{i+j=k} f_i g_j)\\ =& \sum_k \overline{\rho}_k(\sum_{i+j=k} f_i g_j) \le \sum_k \sum_{i+j=k} \overline{\rho}_k(f_i g_j)\\ \le& \sum_k \sum_{i+j=k} \overline{\rho}_i(f_i)\overline{\rho}_j(g_j) = \sum_{i,j} \overline{\rho}_i(f_i)\overline{\rho}_j(g_j)\\ =& (\sum_i\overline{\rho}_i(f_i))(\sum_j \overline{\rho}_j(g_j)) = \overline{\rho}(f)\overline{\rho}(g).
\end{align*}
This proves $\overline{\rho}$ is submultiplicative.
\end{proof}

The algebra $S(V)$ is characterized by the following \it universal property: \rm For each $\mathbb{R}$-linear map $\pi : V \rightarrow A$, where $A$ is an $\mathbb{R}$-algebra (commutative with~1), there exists a unique $\mathbb{R}$-algebra homomorphism $\overline{\pi} : S(V) \rightarrow A$ extending~$\pi$.

Suppose now that $A$ is an $\mathbb{R}$-algebra equipped with submultiplicative seminorm $\sigma$ and $\pi : V \rightarrow A$ is $\mathbb{R}$-linear and continuous with respect to $\rho$ and $\sigma$, i.e., $\exists$ $C>0$ such that $\sigma(\pi(f))\le C\rho(f)$ $\forall$ $f\in V$. Then $\overline{\pi}$ \it need not be continuous \rm with respect to $\overline{\rho}$ and $\sigma$. All one can say in general is

\begin{lemma}\label{l} $\sigma(\overline{\pi}(f)) \le C^k\overline{\rho}_k(f) \ \forall \ f\in S(V)_k.$
\end{lemma}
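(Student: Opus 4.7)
The plan is to unpack the definition of $\overline{\rho}_k$ and use that $\overline{\pi}$ is an algebra homomorphism together with the submultiplicativity of $\sigma$ and the continuity hypothesis on $\pi$.

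First I would fix $f \in S(V)_k$ and take an arbitrary representation $f = \sum_{i=1}^n f_{i1}\cdots f_{ik}$ with $f_{ij} \in V$. Since $\overline{\pi}$ is the unique $\mathbb{R}$-algebra homomorphism extending $\pi$, we have
\begin{equation*}
\overline{\pi}(f) = \sum_{i=1}^n \pi(f_{i1})\cdots \pi(f_{ik}).
\end{equation*}
Applying the triangle inequality and submultiplicativity of $\sigma$ gives
\begin{equation*}
\sigma(\overline{\pi}(f)) \le \sum_{i=1}^n \sigma(\pi(f_{i1}))\cdots \sigma(\pi(f_{ik})).
\end{equation*}

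Next I would apply the continuity bound $\sigma(\pi(v))\le C\rho(v)$ termwise to each of the $k$ factors, obtaining
\begin{equation*}
\sigma(\overline{\pi}(f)) \le C^k \sum_{i=1}^n \rho(f_{i1})\cdots \rho(f_{ik}).
\end{equation*}
Finally, since this bound holds for every representation of $f$ as a sum of products of $k$ elements of $V$, taking the infimum over such representations and recalling the definition of $\overline{\rho}_k$ as a quotient tensor seminorm yields $\sigma(\overline{\pi}(f)) \le C^k \overline{\rho}_k(f)$.

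There is no serious obstacle here: the whole argument is a direct estimation using the universal-property description of $\overline{\pi}$ together with the infimum definition of $\overline{\rho}_k$. The only point to be slightly careful about is that one must bound $\sigma(\overline{\pi}(f))$ by every sum of the form $\sum_i \rho(f_{i1})\cdots \rho(f_{ik})$ before taking the infimum, rather than trying to pass the infimum inside $\sigma$.
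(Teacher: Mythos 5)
Your proof is correct and follows essentially the same route as the paper's: take an arbitrary representation $f=\sum_i f_{i1}\cdots f_{ik}$, use that $\overline{\pi}$ is a homomorphism together with the triangle inequality and submultiplicativity of $\sigma$, apply the bound $\sigma(\pi(v))\le C\rho(v)$ factorwise, and pass to the infimum over representations. Your closing remark about taking the infimum only after establishing the bound for every representation is exactly the right point of care, and it is implicit in the paper's one-line conclusion.
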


\begin{proof} Suppose $f = \sum_i f_{i1} \cdots f_{ik}$, $f_{ij} \in V$. Then $\overline{\pi}(f) = \sum_i \pi(f_{i1})\cdots \pi(f_{ik})$ so
$\sigma(\overline{\pi}(f)) \le  \sum_i \sigma(\pi(f_{i1}))\cdots \sigma(\pi(f_{ik})) \le C^k \sum_i \rho(f_{i1})\cdots \rho(f_{ik})$.
This implies  $\sigma(\overline{\pi}(f)) \le C^k\overline{\rho}_k(f)$.
\end{proof}

Of course, if the operator norm of  $\pi$ with respect to $\rho$ and $\sigma$ is $\le 1$ (i.e., if one can choose $C\le 1$) then $\overline{\pi}$ is continuous with respect to $\overline{\rho}$ and $\sigma$.

\begin{prop} \label{functoriality}
If $\pi : (V,\rho) \rightarrow (A,\sigma)$ has operator norm $\le 1$, then the induced algebra homomorphism $\overline{\pi} : (S(V),\overline{\rho}) \rightarrow (A,\sigma)$ has operator norm $\le \sigma(1)$.
\end{prop}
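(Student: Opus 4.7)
The plan is to separate $S(V)$ into its graded pieces $S(V)_k$ and bound $\sigma \circ \overline{\pi}$ on each piece, then combine the bounds using the fact that $\sigma(1) \ge 1$ whenever $\sigma$ is not identically zero.

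First, by the universal property of $S(V)$, the map $\overline{\pi} : S(V) \to A$ is an $\mathbb{R}$-algebra homomorphism extending $\pi$, so in particular $\overline{\pi}(c) = c \cdot 1_A$ for every $c \in S(V)_0 = \mathbb{R}$. The hypothesis that $\pi : (V,\rho) \to (A,\sigma)$ has operator norm $\le 1$ means $\sigma(\pi(v)) \le \rho(v)$ for all $v \in V$. Applying Lemma \ref{l} with $C=1$, I obtain $\sigma(\overline{\pi}(f)) \le \overline{\rho}_k(f)$ for every $f \in S(V)_k$ with $k \ge 1$. On the other hand, for $f_0 \in S(V)_0$, we have $\sigma(\overline{\pi}(f_0)) = |f_0| \, \sigma(1) = \sigma(1) \, \overline{\rho}_0(f_0)$.

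Next I would handle the two cases for $\sigma(1)$. If $\sigma \equiv 0$ then the inequality is trivial. Otherwise, as noted just before the definition of the Gelfand spectrum in Section 2, submultiplicativity forces $\sigma(1) \ge 1$; indeed $\sigma(1) = \sigma(1 \cdot 1) \le \sigma(1)^2$, and $\sigma(1) = 0$ would give $\sigma \equiv 0$. Hence for each $k \ge 1$ we can weaken the degree-$k$ bound to $\sigma(\overline{\pi}(f_k)) \le \sigma(1)\,\overline{\rho}_k(f_k)$, which matches the degree-$0$ formula exactly.

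Finally, writing $f = \sum_{k=0}^{\ell} f_k$ with $f_k \in S(V)_k$ and applying the subadditivity of $\sigma$ together with the previous estimates gives
\begin{align*}
\sigma(\overline{\pi}(f)) \;\le\; \sum_{k=0}^{\ell} \sigma(\overline{\pi}(f_k)) \;\le\; \sigma(1) \sum_{k=0}^{\ell} \overline{\rho}_k(f_k) \;=\; \sigma(1)\, \overline{\rho}(f),
\end{align*}
which is exactly the claim that $\overline{\pi}$ has operator norm $\le \sigma(1)$.

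The only mildly subtle point is the degree-$0$ contribution: one cannot hope for an operator norm bound of $1$ because $\overline{\pi}(1_{S(V)}) = 1_A$ already has $\sigma$-value $\sigma(1)$, which need not equal $1$. Passing from the tight per-degree bound $1$ (for $k \ge 1$) and the tight bound $\sigma(1)$ (for $k = 0$) to a uniform bound over all degrees is precisely what forces the factor $\sigma(1)$ in the statement, and it is handled by the standard observation $\sigma(1) \ge 1$ recorded above.
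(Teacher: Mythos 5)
Your proof is correct and follows essentially the same route as the paper's: apply Lemma \ref{l} with $C=1$ to bound each graded piece of degree $k\ge 1$ by $\overline{\rho}_k$, treat the degree-$0$ piece via $\sigma(\overline{\pi}(f_0))=|f_0|\sigma(1)$, and combine using $\sigma(1)\ge 1$ and subadditivity. Your explicit remark on why the factor $\sigma(1)$ is unavoidable is a nice addition but the argument itself matches the paper's.
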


\begin{proof} By Lemma \ref{l} and our assumption concerning the operator norm of $\pi$,   $\sigma(\overline{\pi}(f))\le \overline{\rho}_k(f)$ for all $f \in S(V)_k$, $k\ge 1$. Now let $f=\sum_{k=0}^m f_k$, $f_k\in S(V)_k$, $k=0,\dots,m$. 
Then \begin{align*}
 \sigma(&\overline{\pi}(f)) = \sigma(\sum_{k=0}^m \overline{\pi}(f_k)) \le \sum_{k=0}^m \sigma(\overline{\pi}(f_k)) \le \sigma(\overline{\pi}(f_0))+\sum_{k=1}^m \overline{\rho}_k(f_k) \\ = & \sigma(f_0)+\sum_{k=1}^m \overline{\rho}_k(f_k) \le \sigma(f_0)+\sum_{k=1}^m \sigma(1)\overline{\rho}_k(f_k) = \sigma(1)\sum_{k=0}^m \overline{\rho}_k(f_k) = \sigma(1)\overline{\rho}(f).
\end{align*}
We are assuming here that $\sigma$ is not identically zero (so $\sigma(1)\ge 1$).
If $\sigma$ is identically zero the result is trivial.
\end{proof}

The previous proposition describes a phenomenon similar to the one appearing for the second-quantization of contractive operators on Hilbert spaces. In fact, if $B$ is an operator on a Hilbert space $V$ with operator norm $\leq 1$, then it is possible to construct a second-quantization $\Gamma(B)$ of $B$ by defining $$\Gamma(B)\restriction_{S(V)_k}=\left\{\begin{array}{ll}
 1 & \text{if } k=0\\
 \underbrace{B\otimes \cdots\otimes B}_{n-times}&  \text{if } k\in\NN.
 \end{array}\right.$$
Then the operator $\Gamma(B)$ is densily defined in the Fock space $F(V)$ (see~Remark~\ref{rem-Fock}) and for its operator norm we get
$$\| \Gamma(B)\|=\sup_{n\in\NN_0}\| \Gamma(B)\restriction_{S(V)_k}\|=1,$$
since $\| \Gamma(B)\restriction_{S(V)_k}\|\leq \|B\|^k\leq 1$ for all $k\in\NN$.

The character space $X(S(V))$ of $S(V)$ can be identified with the algebraic dual $V^*= \operatorname{Hom}(V,\mathbb{R})$ of $V$. Indeed, by the universal property of $S(V)$, the mapping $\phi: V^*\to X(S(V))$ defined by $\phi(\ell):=\overline{\ell}, \forall \ell\in V^*$ is an algebraic isomorphism whose inverse is given by $\phi^{-1}(\alpha) = \alpha|_V, \forall \alpha\in X(S(V))$. The topology on $V^*$ is the weak topology, i.e., the coarsest topology such that $v^* \in V^*\mapsto v^*(f)\in \mathbb{R}$ is continuous $\forall f\in V$. If we fix a basis $x_i$, $i\in \Omega$ for~$V$, then $S(V)$ is equal to the polynomial ring $\mathbb{R}[x_i : i \in \Omega]$,  $V^* = \mathbb{R}^{\Omega}$ endowed with the product topology,  and the ring homomorphism $\alpha : S(V) \rightarrow \mathbb{R}$ corresponding to $v^* \in V^*$ is evaluation at $v^*$. We are interested here in the Gelfand spectrum $\mathfrak{sp}(\overline{\rho})$. 
\begin{prop} \label{what is sp?} 
$\mathfrak{sp}(\overline{\rho})$ is naturally identified with the closed unit ball $\overline{B}_1(\rho')$ with respect to the operator norm $\rho'$ on $V^*$ defined by 
$$\rho'(v^*) := \inf\{ C \in [0,\infty) : |v^*(w)| \le C\rho(w) \ \forall w \in V\}.$$
\end{prop}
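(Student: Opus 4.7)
The plan is to use the identification $X(S(V)) \cong V^{*}$ (via $\alpha \leftrightarrow v^{*} := \alpha|_V$) to recast $\mathfrak{sp}(\overline{\rho})$ as a subset of $V^{*}$, and then to prove the two inclusions $\mathfrak{sp}(\overline{\rho}) \subseteq \overline{B}_1(\rho')$ and $\overline{B}_1(\rho') \subseteq \mathfrak{sp}(\overline{\rho})$ separately. The crucial observation is that $\rho'(v^{*})$ is precisely the operator norm of the linear map $v^{*} : (V,\rho) \to (\mathbb{R}, |\cdot|)$, so Proposition \ref{functoriality} should do the heavy lifting in one direction.

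For the forward inclusion, suppose $\alpha \in \mathfrak{sp}(\overline{\rho})$ and set $v^{*} = \alpha|_V \in V^{*}$. For every $w \in V$ I have $|v^{*}(w)| = |\alpha(w)| \le \overline{\rho}(w)$, and since $\overline{\rho}$ extends $\rho$ (Proposition \ref{extension}), this equals $\rho(w)$. Hence $\rho'(v^{*}) \le 1$, i.e. $v^{*} \in \overline{B}_1(\rho')$.

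For the reverse inclusion, suppose $v^{*} \in V^{*}$ satisfies $\rho'(v^{*}) \le 1$. By definition of $\rho'$ this says exactly that $v^{*} : (V,\rho) \to (\mathbb{R}, |\cdot|)$ is a linear map of operator norm $\le 1$. Applying Proposition \ref{functoriality} to $(A,\sigma) = (\mathbb{R}, |\cdot|)$ and $\pi = v^{*}$, the unique $\mathbb{R}$-algebra homomorphism $\overline{v^{*}} : S(V) \to \mathbb{R}$ extending $v^{*}$ has operator norm $\le \sigma(1) = 1$ from $(S(V), \overline{\rho})$ to $(\mathbb{R}, |\cdot|)$. In other words, $|\overline{v^{*}}(f)| \le \overline{\rho}(f)$ for every $f \in S(V)$, which is precisely the condition for the character $\alpha = \overline{v^{*}}$ to belong to $\mathfrak{sp}(\overline{\rho})$.

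The two inclusions together give the claimed identification; the bijection is natural because it is just the restriction-extension pair furnished by the universal property of $S(V)$. There is no real obstacle here: the proof of Proposition \ref{extension} was the hard work, and once the projective extension $\overline{\rho}$ is known to be submultiplicative and agree with $\rho$ on $V$, the description of its Gelfand spectrum as the unit ball of the dual norm is essentially a rephrasing of the universal property together with Lemma \ref{l}. The only point to double-check is the edge case where $\rho = 0$, in which $\rho'$ is $+\infty$ on every nonzero $v^{*}$ and $0$ on the zero functional, and correspondingly $\overline{B}_1(\rho') = \{0\}$ matches $\mathfrak{sp}(\overline{\rho})$.
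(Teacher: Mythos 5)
Your proof is correct and follows essentially the same route as the paper's: the forward inclusion from $\overline{\rho}|_V=\rho$, and the reverse inclusion by applying Proposition \ref{functoriality} to $v^{*}:(V,\rho)\to(\mathbb{R},|\cdot|)$. The remark on the degenerate case $\rho=0$ is a harmless extra check.
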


\begin{proof} Let $\alpha \in \mathfrak{sp}(\overline{\rho})$. Thus $\alpha : S(V)\rightarrow \mathbb{R}$ is an $\mathbb{R}$-algebra homomorphism which is $\overline{\rho}$-continuous, i.e., $|\alpha(f)| \le \overline{\rho}(f)$ $\forall$ $f\in S(V)$. Clearly this implies that $|\alpha|_V(f)| \le \rho(f)$ $\forall$ $f\in V$, so $\rho'(\alpha|_V) \le 1$. Suppose conversely that $v^*\in V^*$, $\rho'(v^*)\le 1$. Denote by $\alpha$ the unique extension of $v^*$ to an $\mathbb{R}$-algebra homomorphism $\alpha : S(V) \rightarrow \mathbb{R}$. Observe that 
$|\alpha(f)| = |v^*(f)| \le \rho(f)$ $\forall$ $f\in V$ so $|\alpha(f)| \le \overline{\rho}(f)$ $\forall$ $f\in S(V)$, by Proposition \ref{functoriality}. Thus $\alpha \in \mathfrak{sp}(\overline{\rho})$.
\end{proof}

\begin{example} 
Let $1\le p < \infty$ and consider the space $V:=\ell_p(\NN)$ of all sequences $x=(x_n)_{n\in\NN}$ such that $x_n\in\RR$ for all $n\in\NN$ and $\sum_{n\in\NN}|x_n|^p<\infty$. We endow $V$ with the classical $\ell_p$-norm, which we denote here just by $\rho$, i.e.,
$$\rho(x):=\left(\sum_{n\in\NN}|x_n|^p\right)^{\frac1p}, \forall x=(x_n)_{n\in\NN}\in\ell_p(\NN).$$
 Then $\mathfrak{sp}(\overline{\rho})$ is naturally identified with $\overline{B}_1(\rho') = [-1,1]^{\NN}$ if $p=1$ and with
$\overline{B}_1(\rho') = \{ y\in\ell_q(\NN) : \sum\limits_{n\in \NN} |y_n|^q \le 1 \}$
if $1< p,q <\infty$ with $\frac{1}{p}+\frac{1}{q}=1$.
\end{example}

\begin{cor}\label{sym0} For each seminormed $\mathbb{R}$-vector space $(V,\rho)$ and each integer $d\ge 1$ there is a one-to-one correspondence $L \leftrightarrow \mu$ given by
$L(f) = \int_{V^*} \hat{f} d\mu$ $\forall$ $f\in S(V)$ between
$\overline{\rho}$-continuous linear functionals $L : S(V)\rightarrow
\mathbb{R}$ satisfying $L(\sum S(V)^{2d})\subseteq [0,\infty)$ and
nonnegative Radon measures $\mu$ on $V^*$ supported by
$\overline{B}_1(\rho')$. For any $2d$-power module $M$ of $S(V)$, if
$L \leftrightarrow \mu$ under this correspondence then $L$
is $M$-positive iff $\mu$ is supported by $X_M\cap
\overline{B}_1(\rho')$. Here,
$$X_M\cap \overline{B}_1(\rho') := \{ v^* \in \overline{B}_1(\rho')
: \hat{g}(\overline{v^*})\ge 0 \ \forall g \in M\}.$$
\end{cor}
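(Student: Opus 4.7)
The plan is to recognize Corollary \ref{sym0} as an immediate specialization of Theorem \ref{cor ca} to the algebra $A = S(V)$ equipped with the submultiplicative seminorm $\overline{\rho}$, combined with the concrete description of the character space and Gelfand spectrum developed in this section.

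First I would invoke Proposition \ref{extension} to know that $\overline{\rho}$ is a submultiplicative seminorm on the $\mathbb{R}$-algebra $S(V)$. This puts us in the setting of Theorem \ref{cor ca}, applied with $A$ replaced by $S(V)$ and $\rho$ replaced by $\overline{\rho}$. That theorem yields a natural bijection $L \leftrightarrow \mu$ between $\overline{\rho}$-continuous linear functionals $L : S(V) \to \mathbb{R}$ with $L(\sum S(V)^{2d}) \subseteq [0,\infty)$ and positive Radon measures $\mu$ on $X(S(V))$ supported by $\mathfrak{sp}(\overline{\rho})$, given by $L(f) = \int \hat{f}\,d\mu$ for all $f \in S(V)$. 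It also gives, for any $2d$-power module $M$ of $S(V)$, the equivalence between $M$-positivity of $L$ and support of $\mu$ in $X_M \cap \mathfrak{sp}(\overline{\rho})$.

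Next I would translate the statement from $X(S(V))$ to $V^*$ using the natural identification $\alpha \leftrightarrow \alpha|_V$ discussed just before Proposition \ref{what is sp?}, which makes $X(S(V))$ homeomorphic to $V^*$ in the weak topology. Under this identification, Proposition \ref{what is sp?} shows that $\mathfrak{sp}(\overline{\rho})$ corresponds exactly to the closed ball $\overline{B}_1(\rho') \subseteq V^*$. Substituting $\overline{B}_1(\rho')$ for $\mathfrak{sp}(\overline{\rho})$ in the conclusion of Theorem \ref{cor ca} yields precisely the first assertion of the corollary, and the $M$-positivity clause likewise transfers verbatim, with $X_M \cap \mathfrak{sp}(\overline{\rho})$ becoming $X_M \cap \overline{B}_1(\rho')$ under the same identification.

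There is essentially no hard step here: all the substantive work was carried out in Theorem \ref{cor ca} (the continuous version of Jacobi's theorem applied to a general submultiplicative seminormed algebra), in Proposition \ref{extension} (existence and submultiplicativity of $\overline{\rho}$), and in Proposition \ref{what is sp?} (the identification of the Gelfand spectrum with the polar ball). The only thing to verify is that the pieces dovetail, i.e., that the weak topology on $V^*$ under which $\overline{B}_1(\rho')$ is a closed subset agrees with the subspace topology on $\mathfrak{sp}(\overline{\rho}) \subseteq X(S(V))$; this is immediate from the definitions, since both topologies are generated by the evaluation maps $v^* \mapsto v^*(f)$ for $f \in V$ (equivalently, $\alpha \mapsto \alpha(f)$ for $f \in S(V)$, using that $\alpha$ is determined by its restriction to $V$).
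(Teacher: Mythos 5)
Your proposal is correct and follows exactly the paper's own argument: the paper likewise proves this corollary by applying Theorem~\ref{cor ca} to $(S(V),\overline{\rho})$ and using Proposition~\ref{what is sp?} to identify $\mathfrak{sp}(\overline{\rho})$ with $\overline{B}_1(\rho')$. Your additional remarks on the topology of the identification $X(S(V))\cong V^*$ are a harmless elaboration of what the paper leaves implicit.
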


\begin{proof} In view of Proposition \ref{what is sp?} this is a direct application of Theorem~\ref{cor ca}.
\end{proof}

As expected, we get the following result\footnote{The result is probably well-known but, as we could not find a reference, we included a proof for the convenience of the reader.}.

\begin{prop} \label{norm case} If $\rho$ is a norm then $\overline{\rho}$ is a norm.
\end{prop}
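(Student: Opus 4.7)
The plan is to reduce to showing that each summand $\overline{\rho}_k$ is a norm on $S(V)_k$. Since $\overline{\rho}(f) = \sum_{k=0}^{\ell}\overline{\rho}_k(f_k)$ is a sum of non-negative reals, $\overline{\rho}(f)=0$ forces every $\overline{\rho}_k(f_k)=0$, and then each $f_k=0$ gives $f=0$. The cases $k=0$ (absolute value) and $k=1$ (which equals $\rho$ by Proposition~\ref{extension}) are immediate.

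For $k \geq 2$ I would proceed in two steps. Step~1: iterate the Grothendieck result already cited in the paper (projective tensor product of two norms is a norm) to conclude that $\rho^{\otimes k}$ is a norm on $V^{\otimes k}$. Step~2: introduce the characteristic-zero symmetrization operator $s_k : V^{\otimes k} \to V^{\otimes k}$ defined on elementary tensors by
$$s_k(v_1 \otimes \cdots \otimes v_k) = \frac{1}{k!}\sum_{\sigma \in S_k} v_{\sigma(1)}\otimes \cdots \otimes v_{\sigma(k)}.$$
Directly from its defining formula, $\rho^{\otimes k}$ is invariant under permutation of factors, so $s_k$ has operator norm at most $1$ with respect to $\rho^{\otimes k}$.

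The kernel of $\pi_k$ is generated by elements of the form $v - \sigma v$ for $\sigma \in S_k$, all of which lie in $\ker(s_k)$; hence $s_k$ descends through $\pi_k$ to a well-defined linear map $\widetilde{s}_k : S(V)_k \to \mathrm{Sym}^k V := s_k(V^{\otimes k})$ satisfying $s_k(h) = \widetilde{s}_k(\pi_k(h))$ for every $h \in V^{\otimes k}$. A standard characteristic-zero argument, which reduces to the finite-dimensional case by restricting to a sufficiently large finite-dimensional subspace of $V$ containing the finitely many vectors needed, shows that $\widetilde{s}_k$ is the inverse of $\pi_k|_{\mathrm{Sym}^k V}$. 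Given a non-zero $f \in S(V)_k$, put $g := \widetilde{s}_k(f) \in \mathrm{Sym}^k V$; then $g \neq 0$ and $\pi_k(g) = f$. For any $h \in V^{\otimes k}$ with $\pi_k(h)=f$, contractivity of $s_k$ gives $\rho^{\otimes k}(h) \geq \rho^{\otimes k}(s_k(h)) = \rho^{\otimes k}(g)$, with equality attained at $h = g$, so
$$\overline{\rho}_k(f) = \rho^{\otimes k}(g) > 0,$$
the last inequality being Step~1.

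I expect the main obstacle to be Step~2, specifically the identification of the infimum defining the quotient seminorm with the tensor norm of the canonical symmetric representative $g$. This hinges on the contractivity of $s_k$, which in turn rests on the permutation-symmetry of $\rho^{\otimes k}$ visible from its defining formula. Step~1 is routine modulo iterating Grothendieck's two-factor theorem, but it does require $\rho$ to be a norm, not merely a seminorm.
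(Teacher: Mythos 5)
Your proof is correct, but it takes a genuinely different route from the paper's. The paper argues via point evaluations: given $0\neq f\in S(V)_k$, it picks a point where the polynomial $f$ does not vanish, views evaluation at that point as a linear functional on the finite-dimensional subspace $W$ of $V$ spanned by the relevant basis vectors (automatically $\rho|_W$-continuous because $\rho$ is a norm and $\dim W<\infty$), extends it by Hahn--Banach to some $\Phi\in\overline{B}_1(\rho')$ after rescaling, and concludes $0<|\alpha(f)|\le\overline{\rho}(f)$ for the corresponding character $\alpha\in\mathfrak{sp}(\overline{\rho})$. You instead combine Grothendieck's theorem that the projective tensor product of norms is a norm (which the paper states with a citation but does not actually use in this proof) with the contractivity of the symmetrization projector $s_k$ to show that the infimum defining $\overline{\rho}_k(f)$ is attained at the symmetric representative $g=\widetilde{s}_k(f)$, whence $\overline{\rho}_k(f)=\rho^{\otimes k}(g)>0$. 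Both arguments are sound: permutation invariance of $\rho^{\otimes k}$ is indeed immediate from its defining formula, so $s_k$ has operator norm at most $1$; the identity $\ker\pi_k=\ker s_k$ together with idempotence of $s_k$ gives the inverse pair $(\pi_k|_{\mathrm{Sym}^kV},\widetilde{s}_k)$ without any reduction to finite dimensions; and associativity of the projective tensor norm makes the iteration in your Step 1 legitimate. The trade-off is this: your route leans on Grothendieck's non-trivial result and on characteristic zero, but it yields strictly more than positivity, namely the isometric identification of $(S(V)_k,\overline{\rho}_k)$ with the subspace of symmetric tensors in $(V^{\otimes k},\rho^{\otimes k})$; the paper's route needs only finite-dimensional Hahn--Banach and the fact that a non-zero real polynomial has a non-vanishing point, and it has the side benefit of directly exhibiting a character in $\mathfrak{sp}(\overline{\rho})$ separating $f$ from $0$.
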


\begin{proof} It suffices to show $\overline{\rho}_k$ is a norm for each $k\ge 0$. For $k\in \{ 0,1\}$ this is clear. Fix $k\ge 2$ and $f\in S(V)_k$, $f\ne 0$. Fix a basis $x_i$, $i\in \Omega$ for $V$. Thus $f$ is a homogeneous polynomial of degree $k$ in finitely many variables $x_{i_1},\dots, x_{i_n}$, $i_1,\dots,i_n \in \Omega$. Since $f\ne 0$, there exists some non-zero $(a_{i_1},\dots,a_{i_n})\in \mathbb{R}^n$ such that $f(a_{i_1},\dots,a_{i_n})\ne 0$ \cite[Proposition 1.1.1]{M}. Since $f$ is homogeneous of degree $k$, $f(ra_{i_1},\dots,ra_{i_n}) = r^kf(a_{i_1},\dots,a_{i_n}) \ne 0$ for all non-zero $r\in \mathbb{R}$. Let $W\subseteq V$ be the linear span of $x_{i_1},\dots,x_{i_n}$ and let $\phi :W \rightarrow \mathbb{R}$ be the linear functional defined by $\phi(x_{i_j})= a_{i_j}$, $j=1,\dots,n$. 
Since $W$ is finite dimensional and $\rho|_W$ is a norm (because $\rho$ is a norm), any linear functional on $W$ is $\rho|_W$-continuous. In particular,  $\phi$ is $\rho|_W$-continuous so,
by the Hahn-Banach theorem, $\phi$ extends to a $\rho$-continuous linear functional $\Phi : V \rightarrow \mathbb{R}$ having the same operator norm as $\phi$. Scaling, we can assume $\Phi$ and $\phi$ both have operator norm $1$. Thus $\Phi \in \overline{B}_1(\rho')$. Let $\alpha$ be the  element of $\mathfrak{sp}(\overline{\rho})$ corresponding to~$\Phi$. Then $\alpha(f) = f(a_{i_1},\dots,a_{i_n}) \ne 0$. Since $0<|\alpha(f)| \le \overline{\rho}(f)$, this implies $\overline{\rho}_k(f) = \overline{\rho}(f)> 0$.
\end{proof}

\section{Background on LC topologies and LMC topologies}

We begin by recalling some terminology. Let $V$ be an $\mathbb{R}$-vector space. For two seminorms $\rho_1$ and $\rho_2$ on $V$, we write $\rho_1 \succeq \rho_2$ to indicate that there exists $C>0$ such that $C\rho_1(v) \ge \rho_2(v) \ \forall \ v\in V.$ The \it maximum \rm of $\rho_1$ and $\rho_2$ is the seminorm $\rho = \max\{ \rho_1,\rho_2\}$ on $V$ defined by $$\rho(v) := \max\{\rho_1(v), \rho_2(v)\} \ \forall \ v\in V.$$ A \it locally convex \rm (lc) topology on $V$ is just the topology on $V$ generated by some family $\mathcal{S}$ of seminorms on $V$, i.e., it is the coarsest topology on $V$ such that each $\rho \in \mathcal{S}$ is continuous. Closing $\mathcal{S}$ up under the taking of max does not change the topology. In view of this,
there is no harm in assuming, from the beginning, that the family $\mathcal{S}$ is \it directed, \rm i.e., $$\forall \ \rho_1,\rho_2 \in \mathcal{S}, \ \exists \ \rho \in \mathcal{S} \text{ such that } \rho \succeq \max\{ \rho_1,\rho_2\}.$$ 
With this assumption, the open balls $$U_r(\rho) := \{ v \in V : \rho(v)<r\}, \ \rho \in \mathcal{S}, \ r>0$$ form a basis of neighbourhoods of zero (not just a subbasis).

\begin{lemma}\label{lmc2}
Suppose $\tau$ is a locally convex topology on $V$ generated by a directed family $\mathcal{S}$ of seminorms of $V$ and
$L : V \rightarrow \mathbb{R}$ is a $\tau$-continuous linear functional. Then there exists
$\rho \in \mathcal{S}$ such that $L$ is $\rho$-continuous (and conversely, of course).
\end{lemma}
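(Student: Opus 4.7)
The plan is to reduce $\tau$-continuity of $L$ at $0$ to a single seminorm estimate using the fact that, since $\mathcal{S}$ is directed, the sets $U_r(\rho)$ with $\rho \in \mathcal{S}$, $r>0$ form a \emph{basis} of neighbourhoods of $0$ (as noted just before the lemma).

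First I would use that $L : V \to \mathbb{R}$ is linear, so $\tau$-continuity is equivalent to continuity at $0$. Apply this to the open neighbourhood $(-1,1)$ of $0$ in $\mathbb{R}$: there is a $\tau$-open set $W$ with $0 \in W$ and $L(W) \subseteq (-1,1)$. Since the $U_r(\rho)$ form a basis of neighbourhoods of $0$, there exist $\rho \in \mathcal{S}$ and $r>0$ with $U_r(\rho) \subseteq W$; hence $|L(v)| < 1$ whenever $\rho(v) < r$.

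Next I would upgrade this to the estimate $|L(v)| \le \frac{1}{r}\,\rho(v)$ for all $v \in V$, which is precisely $\rho$-continuity of $L$. For $v$ with $\rho(v) > 0$ and any $\epsilon > 0$, set $w := \frac{r}{\rho(v)+\epsilon}\,v$; then $\rho(w) = \frac{r\,\rho(v)}{\rho(v)+\epsilon} < r$, so $|L(w)| < 1$, which by linearity rearranges to $|L(v)| < \frac{\rho(v)+\epsilon}{r}$. Letting $\epsilon \downarrow 0$ gives the desired bound. For $v$ with $\rho(v) = 0$, note that $\rho(\lambda v) = 0 < r$ for every $\lambda > 0$, so $\lambda |L(v)| = |L(\lambda v)| < 1$ for all $\lambda > 0$, forcing $L(v) = 0$; the estimate then holds trivially.

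There is essentially no serious obstacle here — the content of the lemma is the observation that a directed generating family of seminorms yields a \emph{basis} (not merely a subbasis) of $0$-neighbourhoods, so a single $U_r(\rho)$ can be trapped inside any given $\tau$-neighbourhood; everything else is the standard homogenization trick that converts a bound on a ball into a global Lipschitz-type estimate. The converse is immediate, since each $\rho \in \mathcal{S}$ is $\tau$-continuous, hence so is any $\rho$-continuous linear functional.
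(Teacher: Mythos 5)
Your proof is correct and follows essentially the same route as the paper's: both locate a basic neighbourhood $U_r(\rho)$ on which $|L|<1$ (using that directedness makes the $U_r(\rho)$ a basis, not just a subbasis) and then rescale to conclude $\rho$-continuity. The only difference is cosmetic --- you make the resulting bound $|L(v)|\le \tfrac{1}{r}\rho(v)$ explicit and treat the $\rho(v)=0$ case separately, whereas the paper phrases the same scaling argument as $L(U_{r\epsilon}(\rho))\subseteq(-\epsilon,\epsilon)$.
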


\begin{proof} This is well-known.
The set $\{ v \in V : |L(v)|<1\}$ is an open neighbourhood of the origin in $V$ so there exists $\rho \in \mathcal{S}$ and $r>0$ such that
$U_r(\rho) \subseteq \{ v \in V : |L(v)|<1\}$. Then $U_{r\epsilon}(\rho) = \epsilon U_r(\rho)$ and so we have $L(U_{r\epsilon}(\rho)) = L(\epsilon U_r(\rho)) = \epsilon L(U_r(\rho)) \subseteq \epsilon (-1,1) = (-\epsilon,\epsilon)$ for all $\epsilon>0$, i.e., $L$ is $\rho$-continuous.
\end{proof}

A \it locally multiplicatively convex \rm (lmc) topology $\tau$ on an $\mathbb{R}$-algebra $A$ is just the topology on $A$ generated by some family $\mathcal{S}$ of submultiplicative seminorms on $A$.  See \cite[Section 4.3-2]{BNS}, \cite[Theorem 3.1]{Mal} or \cite[Lemma 1.2]{Mi} for more detail on lmc topologies. 
Again, we can always assume that the family $\mathcal{S}$ is directed.

Let $\tau$ be an lmc topology on an $\mathbb{R}$-algebra $A$. We
denote the Gelfand spectrum of $(A,\tau)$, i.e., the  set of all
$\tau$-continuous $\alpha \in X(A)$, by $\mathfrak{sp}(\tau)$ for
short. Lemma \ref{lmc2} implies that  $$\mathfrak{sp}(\tau) =
\bigcup_{\rho \in \mathcal{S}} \mathfrak{sp}(\rho).$$ Since
$\mathcal{S}$ is directed, this union is directed by inclusion.
Theorem \ref{continuity assumption} extends to general lmc
topologies in an obvious way: If $M$ is any $2d$-power module of $A$
then $$\overline{M}^{\tau} = \operatorname{Pos}(X_M \cap
\mathfrak{sp}(\tau)),$$ where $\overline{M}^{\tau}$ denotes the closure
of $M$ with respect to $\tau$, see
\cite[Theorem 5.4]{GKM}.  In particular, for $M=\sum
A^{2d}$, we get that $$\overline{\sum
A^{2d}}^{\tau} = \operatorname{Pos}(\mathfrak{sp}(\tau)).$$  Theorem \ref{cor ca} also extends to
general lmc topologies. By Lemma \ref{lmc2}, the unique Radon
measure $\mu$ corresponding to a $\tau$-continuous linear functional
$L : A \rightarrow \mathbb{R}$ such that $L$ is $M$-positive is
supported by the compact set $X_M\cap \mathfrak{sp}(\rho)$ for some
$\rho \in \mathcal{S}$. Indeed, for $\rho \in \mathcal{S}$, $\mu$ is
supported by $X_M\cap \mathfrak{sp}(\rho)$ iff $L$ is
$\rho$-continuous.

\section{LMC topologies on symmetric algebras}\label{Sec:lmc}

Let $V$ be an $\mathbb{R}$-vector space. For a seminorm $\rho$ on $V$, we consider the extension of $\rho$ to a submultiplicative seminorm $\overline{\rho}$ of $S(V)$ defined in Section~3.

For seminorms $\rho_1$, $\rho_2$ on $V$, it is important to note that $\rho_1 \succeq \rho_2$ does not imply in general that  $\overline{\rho_1} \succeq \overline{\rho_2}$ but only that $\overline{C\rho_1} \succeq \overline{\rho_2}$ for some $C>0$. This follows from Proposition \ref{functoriality} applied to $\mathfrak{e}:(V, C\rho_1)\hookrightarrow (S(V), \overline{\rho_2})$, where $C >0$ is such that $C\rho_1 \ge \rho_2$ on $V$. Note that Proposition \ref{functoriality} is applicable because the operator norm of $\mathfrak{e}$ is $\leq 1$ by the assumption $\rho_1 \succeq \rho_2$. 

Let $\tau$ be any locally convex topology on $V$. We claim there is a unique finest lmc topology $\overline{\tau}$ on $S(V)$ extending $\tau$. This is pretty clear at this point. Let $\mathcal{S}$ be a family of seminorms on $V$ defining $\tau$. We may assume $\mathcal{S}$ is directed.
Denote by $\overline{\tau}$ the lmc topology on $S(V)$ determined by the directed family of submultiplicative seminorms $\overline{i\rho}$,  $\rho\in \mathcal{S}$, $i\in \{1,2,3,\dots\}$.

\begin{prop} $\overline{\tau}$ extends $\tau$ and is the finest lmc topology on $S(V)$ with this property.
\end{prop}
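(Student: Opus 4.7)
The plan is to verify the two assertions of the proposition, namely that $\overline{\tau}$ extends $\tau$ and that it is the finest lmc topology on $S(V)$ with this property, as separate short arguments, both of which reduce to applying the machinery already set up in Section~3.

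For the first assertion, Proposition~\ref{extension} gives $\overline{\rho}|_V = \rho$, hence $\overline{i\rho}|_V = i\rho$ for each $\rho \in \mathcal{S}$ and each integer $i \ge 1$. The subspace topology on $V$ induced by $\overline{\tau}$ is therefore generated by the family $\{\, i\rho : \rho \in \mathcal{S},\ i \ge 1\,\}$, and this defines the same locally convex topology on $V$ as $\mathcal{S}$ itself (since positive rescaling of a seminorm does not alter the topology it generates), namely $\tau$. So $\overline{\tau}|_V = \tau$.

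For the second assertion, let $\tau'$ be any lmc topology on $S(V)$ extending $\tau$, generated by a directed family $\mathcal{S}'$ of submultiplicative seminorms on $S(V)$. It suffices to show every $\sigma \in \mathcal{S}'$ is $\overline{\tau}$-continuous. The restriction $\sigma|_V$ is a $\tau$-continuous seminorm on $V$, so by the argument of Lemma~\ref{lmc2} (which adapts verbatim from linear functionals to seminorms) there exist $\rho \in \mathcal{S}$ and $C>0$ with $\sigma(v) \le C\rho(v)$ for all $v \in V$. Pick an integer $i \ge C$. Then the inclusion $\pi : V \hookrightarrow S(V)$, viewed as a linear map from $(V, i\rho)$ to $(S(V), \sigma)$, has operator norm $\le 1$, and its unique algebra extension $\overline{\pi} : S(V) \to S(V)$ is the identity. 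Applying Proposition~\ref{functoriality} gives
$$\sigma(f) \le \sigma(1)\,\overline{i\rho}(f) \quad \text{for all } f \in S(V),$$
with the degenerate case $\sigma(1)=0$ being trivial since then $\sigma \equiv 0$ by submultiplicativity. Hence $\sigma$ is continuous with respect to the seminorm $\overline{i\rho}$ belonging to the defining family of $\overline{\tau}$, which shows $\tau' \subseteq \overline{\tau}$.

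The only subtlety—and the reason the construction of $\overline{\tau}$ was set up to include all integer rescalings $\overline{i\rho}$ rather than just the seminorms $\overline{\rho}$—is that the constant $C$ produced in the previous paragraph can be arbitrarily large, while, as emphasized just before the statement of the proposition, $\rho_1 \succeq \rho_2$ on $V$ does not by itself imply $\overline{\rho_1} \succeq \overline{\rho_2}$ on $S(V)$. The presence of the rescalings $\overline{i\rho}$ in the generating family is exactly what is needed to absorb the scaling constant into the application of Proposition~\ref{functoriality}, which is the one place the argument could otherwise fail.
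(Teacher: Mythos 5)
Your proof is correct and follows essentially the same route as the paper: the extension property via $\overline{i\rho}|_V = i\rho$, and maximality by bounding any submultiplicative seminorm $\sigma$ of a competing lmc topology by $C\rho$ on $V$ and then applying Proposition~\ref{functoriality} to the inclusion $(V,i\rho)\hookrightarrow (S(V),\sigma)$ to get $\sigma \le \sigma(1)\,\overline{i\rho}$ on all of $S(V)$. Your closing remark about why the integer rescalings $\overline{i\rho}$ must be included in the generating family correctly identifies the one point where the argument would otherwise break down.
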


\begin{proof} The sets $U_r(\rho)$,
$\rho \in \mathcal{S}$, $r>0$ form a basis of neighbourhoods of the origin in $(V,\tau)$, and the sets $U_r(\overline{i\rho})$, 
$\rho \in \mathcal{S}$,  $i\ge 1$,  $r>0$ form a basis of neighbourhoods of the origin in $(S(V),\overline{\tau})$. Since $U_r(\overline{i\rho})\cap V = U_{\frac{r}{i}}(\rho)$ it is clear that $\overline{\tau}$ extends $\tau$. That $\tau$ is the finest lmc topology with this property is a consequence of Proposition \ref{functoriality}: If $N$ is any submultiplicative seminorm on $S(V)$ such that the topology  induced by $N|_V$ is coarser than $\tau$ then there exists $\rho \in \mathcal{S}$ and $i \ge 1$ such that $N(f) \le i\rho(f)$ $\forall$ $f\in V$. By Proposition \ref{functoriality} applied to the inclusion $(V, i\rho)\hookrightarrow (S(V), N)$ having operator norm $\leq 1$, we get that $ N(f) \le N(1)\overline{i\rho}(f)$, $\forall$ $f\in S(V)$. This implies the topology induced by $N$ is coarser than that induced by $\overline{i\rho}$.
\end{proof}

In view of Lemma \ref{lmc2} every $\overline{\tau}$-continuous linear functional $L : S(V) \rightarrow \mathbb{R}$ is $\overline{i\rho}$-continuous for some $\rho \in \mathcal{S}$ and some $i\ge 1$ (and conversely, of course) so Corollary \ref{sym0} can be applied directly to characterize $\overline{\tau}$-continuous linear functionals $L : S(V) \rightarrow \mathbb{R}$ satisfying
$L(\sum S(V)^{2d})\subseteq [0,\infty)$ in terms of measures.
Note also that $(i\rho)' = \frac{1}{i}\rho'$ so $$\mathfrak{sp}(\overline{i\rho}) = \overline{B}_1((i\rho)') = \overline{B}_i(\rho').$$ Consequently, $$\mathfrak{sp}(\overline{\tau}) = \bigcup\limits_{i \ge 1, \rho \in \mathcal{S}} \mathfrak{sp}(\overline{i\rho}) = \bigcup\limits_{i \ge 1, \rho \in \mathcal{S}} \overline{B}_1((i\rho)') =  \bigcup\limits_{i \ge 1, \rho \in \mathcal{S}} \overline{B}_i(\rho').$$

There is special interest in the case where $\tau$ is the finest locally convex topology of $V$, see Corollary \ref{cor-finest} and Proposition \ref{ex-finest} below.

\begin{cor}\label{sym} Let $\tau$ be the locally convex topology on an $\mathbb{R}$-vector space $V$ defined by a directed family $\mathcal{S}$ of seminorms on $V$. For each integer $d\ge 1$ there is a natural one-to-one correspondence $L \leftrightarrow \mu$ given by
$L(f) = \int_{V^*}\hat{f} d\mu$ $\forall$ $f\in S(V)$ between
$\overline{\tau}$-continuous linear functionals $L : S(V)\rightarrow
\mathbb{R}$ satisfying $L(\sum S(V)^{2d})\subseteq [0,\infty)$ and
nonnegative Radon measures $\mu$ on $V^*$ supported by
$\overline{B}_i(\rho')$ for some $\rho \in \mathcal{S}$ and some
integer $i\ge 1$. If $\mu$ is supported by $\overline{B}_i(\rho')$
then $L$ is $\overline{i\rho}$-continuous, and conversely. For any
$2d$-power module $M$ of $S(V)$, if $L \leftrightarrow \mu$ under
this correspondence then $L$ is $M$-positive iff $\mu$ is supported
by $X_M\cap \overline{B}_i(\rho')$. Here,
$$X_M\cap \overline{B}_i(\rho') := \{ v^* \in \overline{B}_i(\rho')
: \hat{g}(\overline{v^*})\ge 0 \ \forall g \in M\}.$$
\end{cor}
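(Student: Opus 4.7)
The plan is to reduce the assertion to Corollary \ref{sym0} by means of Lemma \ref{lmc2}, since the excerpt has already established that $\overline{\tau}$ is defined by the directed family of submultiplicative seminorms $\{\overline{i\rho} : \rho \in \mathcal{S}, \ i \ge 1\}$ and that $\mathfrak{sp}(\overline{i\rho}) = \overline{B}_i(\rho')$.

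First, I would start with a $\overline{\tau}$-continuous linear functional $L : S(V) \to \mathbb{R}$ satisfying $L(\sum S(V)^{2d}) \subseteq [0,\infty)$. By Lemma \ref{lmc2} applied to the directed family of submultiplicative seminorms defining $\overline{\tau}$, there exist $\rho \in \mathcal{S}$ and $i \ge 1$ such that $L$ is $\overline{i\rho}$-continuous. Applying Corollary \ref{sym0} to the seminormed space $(V, i\rho)$, I obtain a unique positive Radon measure $\mu$ on $V^*$ supported by $\overline{B}_1((i\rho)')$, and the identity $(i\rho)' = \tfrac{1}{i}\rho'$ gives $\overline{B}_1((i\rho)') = \overline{B}_i(\rho')$, establishing the required integral representation with support on $\overline{B}_i(\rho')$.

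For the converse direction, given a positive Radon measure $\mu$ on $V^*$ supported by $\overline{B}_i(\rho') = \mathfrak{sp}(\overline{i\rho})$ for some $\rho \in \mathcal{S}$ and $i \ge 1$, Corollary \ref{sym0} (applied to $(V,i\rho)$) produces an $\overline{i\rho}$-continuous linear functional $L(f) := \int \hat{f}\, d\mu$ with $L(\sum S(V)^{2d}) \subseteq [0,\infty)$. Since $\overline{i\rho}$ is one of the seminorms defining $\overline{\tau}$, $L$ is automatically $\overline{\tau}$-continuous. The equivalence ``$\mu$ supported by $\overline{B}_i(\rho')$ iff $L$ is $\overline{i\rho}$-continuous'' then follows by combining Lemma \ref{lmc2} (forward direction) with the construction above (backward direction).

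The point that requires a little more care is the bijectivity of the correspondence, since the pair $(\rho,i)$ witnessing support is not unique: a single $\mu$ may be supported by several balls $\overline{B}_i(\rho')$, and conversely the same $L$ may be $\overline{i\rho}$-continuous for several choices. However, the measure $\mu$ determines $L$ uniquely by integration, and conversely, uniqueness in Corollary \ref{sym0} guarantees that for any particular witnessing ball the corresponding measure is unique; hence any two measures associated to the same $L$ must coincide on every compact ball $\overline{B}_i(\rho')$ they are both supported on, and by Radon regularity they agree globally on $V^*$. The $M$-positivity refinement is immediate from the corresponding statement in Corollary \ref{sym0} applied to the seminorm $i\rho$, which gives support on $X_M \cap \mathfrak{sp}(\overline{i\rho}) = X_M \cap \overline{B}_i(\rho')$. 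The main (mild) obstacle is thus the bookkeeping for well-definedness of the correspondence across different choices of $(\rho,i)$; the substance has already been done in Corollary \ref{sym0} and Lemma \ref{lmc2}.
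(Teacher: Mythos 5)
Your proposal is correct and follows essentially the same route as the paper: the paper likewise reduces Corollary \ref{sym} to Corollary \ref{sym0} via Lemma \ref{lmc2} applied to the directed family $\{\overline{i\rho} : \rho\in\mathcal{S},\ i\ge 1\}$ defining $\overline{\tau}$, together with the identification $\mathfrak{sp}(\overline{i\rho})=\overline{B}_1((i\rho)')=\overline{B}_i(\rho')$. Your extra care about well-definedness across different witnessing pairs $(\rho,i)$ is sound (and could be phrased even more directly via the determinacy of compactly supported Radon measures from Proposition 2.3), but it is the same argument the paper leaves implicit.
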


\begin{cor}\label{cor-finest}
If $\tau$ is the finest locally convex topology on $V$, then $\overline{\tau}$ is the finest lmc topology on $S(V)$.
\end{cor}
 
Suppose $V$ is finite dimensional with basis $x_1,\dots,x_n$,  so  $S(V)= \mathbb{R}[\underline{x}]$ and $V^* = \mathbb{R}^n$. The finest locally convex topology on $V$ is generated by any fixed norm $\rho$ of $V$. The singleton set $\{\rho\}$ is obviously directed, so $\{ \overline{i\rho} : i \ge 1\}$ generates the finest lmc topology on $S(V)$.

Below we handle the general case.   
\begin{prop} \label{ex-finest}\ \\
Fixed a basis $\{x_i, i\in \Omega\}$ for $V$, for any $r = (r_i)_{i\in \Omega} \in (0,\infty)^{\Omega}$, let $\rho_r$ denote the following norm on $V$
$$\rho_r(\sum_{i\in \Omega} a_ix_i) := \sum_{i \in \Omega} |a_i|r_i.$$  Then the set $\{ \rho_r : r \in (0,\infty)^{\Omega}\}$ generates the finest locally convex topology on $V$ and $\{ \overline{\rho_r} : r \in (0,\infty)^{\Omega}\}$ generates the finest lmc topology on $S(V)$. Furthermore, $\overline{\rho}_r (\sum a_kx^k) = \sum |a_k|r^k$ where $r^k$ denotes the result of evaluating~$x^k$ at $x=r$, i.e., $r^k := \prod_{i\in \Omega} r_i^{k_i}$.
\end{prop}

Note that although the definition of $\rho_r$ depends on the choice of a basis for $V$, the topology generated by the family $\left\{\rho_r: r\in (0,\infty)^\Omega\right\}$, (i.e., the coarsest topology such that all the $\rho_r$'s are continuous) is instead independent of the chosen basis. It is also important to observe that for $V=\ell_1(\NN)$ the norm $\rho_r$ does not coincide with the classical $\ell_1-$norm.
\proof
Since $\{x_i, i\in \Omega\}$ is a basis for $V$, we have that $S(V) = \mathbb{R}[x_i : i \in \Omega]$ and $V^* = \mathbb{R}^{\Omega}$. It is clear that if $\rho$ is any seminorm on $V$ and $\rho(x_i) \le r_i$ $\forall$ $i\in \Omega$ then $\rho \preceq \rho_r$. If $f = \sum_{i\in \Omega} a_ix_i\in V$, then $$\rho(f) \le \sum_{i\in \Omega} \rho(a_ix_i) = \sum_{i\in \Omega} |a_i|\rho(x_i) \le \sum_{i\in \Omega} |a_i|r_i = \rho_r(f).$$ It follows that the set $\{ \rho_r : r \in (0,\infty)^{\Omega}\}$ generates the finest locally convex topology on $V$. Since this set is directed and $j\rho_r = \rho_{jr}$, for any $j\ge 1$, we have that $\{ \overline{\rho_r} : r \in (0,\infty)^{\Omega}\}$ generates the finest lmc topology on $S(V)$. 

Recall that the monomials $x^k := \prod_{i\in \Omega}x_i^{k_i}$ form a basis for $S(V)$ as a vector space over $\mathbb{R}$ and let $\tilde{\rho}_r (\sum a_kx^k) := \sum |a_k|r^k$. Clearly, $\tilde{\rho}_r$ is a submultiplicative seminorm on $S(V)$ and $\tilde{\rho}_r|_V = \rho_r$, so $\tilde{\rho}_r \preceq \overline{\rho}_r$ by Proposition~\ref{functoriality}. On the other hand, the definition of $\overline{\rho}_r$ implies that $\overline{\rho}_r \preceq \tilde{\rho}_r$.
\endproof
Moreover, it is easy to see that $\overline{B}_1(\rho_r') = \prod_{i\in \Omega} [-r_i,r_i].$

\section{Comparison with results in \cite{BK}, \cite{BS}, \cite{I} and \cite{IKR}}
We assume in this section that $(V,\tau)$ belongs to the special class of locally convex spaces considered in \cite[Vol. II, Chapter~5, Section~2]{BK}, \cite{BS}, \cite[Section~3]{I} and \cite{IKR}. Namely, $(V, \tau)$ is assumed to be a separable and nuclear locally convex space, see e.g. \cite[Section 2]{Gr}, \cite[p.100]{Schae}, \cite[Definition~50.1]{Tre67} for the definition.\footnote{Recall that a normed space is not nuclear unless it is finite dimensional. However, every separable infinite dimensional Banach space contains a nuclear subspace (see e.g.~\cite{Val}).} W.l.o.g. we can assume $V$ to be the projective limit of a family $(H_s)_{s\in S}$ of Hilbert spaces ($S$ is an index set containing~$0$) which is directed by topological embedding and such that each $H_s$ is embedded topologically into $H_0$.\footnote{Any separable complete nuclear space is isomorphic to the projective limit of a family of Hilbert spaces (see e.g. \cite[p.103]{Schae}).} Thus $\tau$ is the locally convex topology on $V$ generated by the directed family $\mathcal{S}$ of the norms on $V$ which are induced by the embeddings $V \hookrightarrow H_s$, $s \in S$. In \cite{BK}, \cite{I},  \cite{IKR} the topology $\tau$ is referred to as the \it projective topology \rm on~$V$.

\begin{theorem} \label{nuclear} Let $(V,\tau)$ be a separable nuclear space and let $L : S(V) \rightarrow \mathbb{R}$ be a linear functional. Assume
\begin{enumerate}[(1)]
\item $L(\sum S(V)^2) \subseteq [0,\infty)$;
\item for each $k\ge 0$ the restriction map $L : S(V)_k \rightarrow \mathbb{R}$ is continuous with respect to the locally convex
topology $\overline{\tau}_k$
on $S(V)_k$ induced by the norms $\{\overline{\rho}_k : \rho \in \mathcal{S}\}$; and
\item there exists a countable subset $E$ of $V$ whose linear span is dense in $(V,\tau)$ such that, if
$$ m_0 := \sqrt{L(1)}, \text{ and } m_k := \sqrt{\sup_{f_1, \dots ,f_{2k} \in E} |L(f_1\dots f_{2k})|}, \text{ for } k \ge 1,$$ then the class $C\{ m_k\}$
is quasi-analytic.
\end{enumerate}
Then there exists a Radon measure $\mu$ on the dual space $V^*$ supported by the topological dual $V'$ of $(V,\tau)$ such that $L(f)= \int \hat{f} d\mu$ $\forall$ $f\in S(V)$.
\end{theorem}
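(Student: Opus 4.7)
The plan is to construct $\mu$ as a projective limit of finite-dimensional representing measures produced by the countable set $E$, use quasi-analyticity to make each finite-dimensional subproblem determinate, and then exploit hypothesis~(2) together with the special nuclearity of $(V,\tau)$ to concentrate the support on the topological dual $V' \subseteq V^*$.

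\textbf{Finite-dimensional determinate subproblems.} For each finite subset $F = \{f_1, \ldots, f_n\} \subseteq E$, the restriction of $L$ to the polynomial subalgebra $\mathbb{R}[f_1, \ldots, f_n] \subseteq S(V)$ is, by hypothesis~(1), a positive linear functional on a polynomial ring in $n$ commuting variables whose moments are dominated in absolute value by $m_k^2$. Quasi-analyticity of the class $C\{m_k\}$ gives Carleman's condition $\sum_k m_k^{-1/k} = \infty$, so Nussbaum's multi-dimensional Hamburger moment theorem produces a \emph{unique} positive Radon measure $\mu_F$ on $\mathbb{R}^F$ representing $L$ on $\mathbb{R}[f_1, \ldots, f_n]$. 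Uniqueness makes $\{\mu_F : F \subseteq E \text{ finite}\}$ into a consistent projective system under coordinate projections, and Kolmogorov's extension theorem delivers a cylinder measure $\mu_E$ on $\mathbb{R}^E$. Identifying $\mathbb{R}^E$ with the algebraic dual of $\operatorname{span}(E)$, and using the $\tau$-density of $\operatorname{span}(E)$ in $V$, I regard $\mu_E$ as a cylinder measure on $V^*$.

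\textbf{Concentration on $V'$ and full representation.} Apply hypothesis~(2) with $k=2$: there exist $\rho \in \mathcal{S}$ and $C > 0$ with $L(v \cdot v) \le C\rho(v)^2$ for all $v \in V$, i.e., the second moment of $\mu_E$ is dominated by a continuous quadratic form on $V$. By the nuclearity assumption, choose $\rho' \in \mathcal{S}$ with $\rho' \succeq \rho$ such that the induced embedding $H_{\rho'} \hookrightarrow H_{\rho}$ of Hilbert completions is quasi-nuclear. For an orthonormal basis $\{e_n\}$ of $H_{\rho'}$, the sum $\sum_n L(e_n \cdot e_n)$ is controlled by the Hilbert--Schmidt norm of this embedding, giving $\int \|v^*\|_{H_{\rho'}^*}^{\,2}\, d\mu_E(v^*) = \sum_n L(e_n \cdot e_n) < \infty$, which forces $\mu_E$ to concentrate on $H_{\rho'}^* \subseteq V'$. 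A standard Minlos--Prokhorov tightening then promotes $\mu_E$ to a genuine positive Radon measure $\mu$ on $V^*$ supported by $V'$. The identity $L(p) = \int \hat p\, d\mu$ holds on the dense subalgebra $\mathbb{R}[E] \subseteq S(V)$ by construction, and, using the continuity of $L$ on each $S(V)_k$ together with integrability of polynomials of arbitrary degree against $\mu$ (obtained by iterating the second-moment argument with higher $k$ and dominated convergence), it extends to all of $S(V)$.

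\textbf{Main obstacle.} The crux is the concentration step, namely converting the homogeneous-part continuity in~(2) into support of $\mu$ on $V'$. This is precisely where the special nuclear structure---separability, projective-limit of Hilbert spaces, quasi-nuclear embeddings---is essential: without it, the projective-limit cylinder measure $\mu_E$ remains on the algebraic dual $V^*$ and need not correspond to a Radon measure concentrated on $V'$. A secondary difficulty is upgrading the $k=2$ bound to uniform integrability of monomials of all orders, which forces one to apply~(2) and quasi-nuclearity inductively in $k$ rather than only at $k=2$.
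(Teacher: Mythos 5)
The paper does not actually prove Theorem \ref{nuclear}: its ``proof'' is the citation ``See \cite[Vol.\ II, Theorem 2.1]{BK} and \cite{BS}.'' The argument in those references is operator-theoretic: one performs a GNS-type construction from $L$, realizes multiplication by elements of $V$ as a commuting family of symmetric operators, uses the quasi-analyticity hypothesis to obtain essential self-adjointness and commuting spectral measures, and then invokes the projection spectral theorem for nuclear riggings to produce $\mu$ supported by some $H_{-s}\subseteq V'$. Your route --- Nussbaum's theorem on finite subsets of $E$, determinacy-driven consistency, Kolmogorov extension to $\mathbb{R}^E$, and a Minlos/Hilbert--Schmidt concentration argument using hypothesis (2) and quasi-nuclearity --- is a genuinely different, direct measure-theoretic strategy (closer in spirit to \cite{I}, \cite{IKR}), and its skeleton is sound: it buys a more transparent role for each hypothesis (quasi-analyticity for determinacy, (2) plus nuclearity for concentration) at the cost of more bookkeeping in the final extension step.

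Three points need repair before the sketch is a proof. First, you cannot ``regard $\mu_E$ as a cylinder measure on $V^*$'' merely because $\operatorname{span}(E)$ is $\tau$-dense: a generic point of $\mathbb{R}^E$ is a linear functional on $\operatorname{span}(E)$ with no extension to $V$, and a discontinuous one would not extend uniquely even if it had one. The correct order is to first run your concentration argument inside $\mathbb{R}^E$, conclude that $\mu_E$-almost every functional is bounded on the $H_{\rho'}$-unit ball of $\operatorname{span}(E)$, and only then extend (uniquely, by density) to an element of $H_{\rho'}^{*}\subseteq V'$ and push $\mu_E$ forward; for the identity $\int|v^*(e_n)|^2\,d\mu_E=L(e_n^2)$ to make sense you must also take the orthonormal basis $\{e_n\}$ inside $\operatorname{span}(E)$ (Gram--Schmidt, using that $\operatorname{span}(E)$ is dense in $H_{\rho'}$). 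Second, quasi-analyticity of $C\{m_k\}$ is \emph{not} literally the condition $\sum_k m_k^{-1/k}=\infty$ (Denjoy--Carleman involves the log-convex regularization, and the inequality goes the wrong way); what saves you is that each marginal sequence $s_k=L(f_i^{2k})$ is log-convex by Cauchy--Schwarz for the form $(a,b)\mapsto L(ab)$, is dominated by $m_k^2$, and for log-convex sequences quasi-analyticity of the majorizing class does yield $\sum_k s_k^{-1/(2k)}=\infty$, which is the Carleman condition Nussbaum needs. Third, in the final extension from $\mathbb{R}[E]$ to $S(V)$ you need, for each degree $k$, a single norm $\sigma_k\in\mathcal{S}$ that simultaneously dominates the norm of continuity of $L$ on $S(V)_k$ and satisfies $\int\|v^*\|_{H_{\sigma_k}^*}^{k}\,d\mu<\infty$; this is where you must iterate the quasi-nuclearity argument at every even degree, and the resulting Hilbert spaces may differ with $k$ (a countable intersection of full-measure sets still suffices for the support claim). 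None of these is fatal, but as written each is a genuine gap.
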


\begin{proof} See \cite[Vol. II, Theorem 2.1]{BK} and \cite{BS}.
\end{proof}

\begin{rem} \label{final}\ 
\begin{enumerate}[(1)] 
\item By definition, $\overline{\rho}_k$ is the quotient norm induced by the norm $\rho^{\otimes k}$ on $V^{\otimes k}$ via the surjective linear map $\pi_k : V^{\otimes k} \rightarrow S(V)_k$. It follows that, for any linear map $L : S(V)_k \rightarrow \mathbb{R}$, $L$ is $\overline{\rho}_k$-continuous iff $L\circ \pi_k : V^{\otimes k} \rightarrow \mathbb{R}$ is $\rho^{\otimes k}$-continuous. This is clear.
\item In \cite{BK}, \cite{BS}, \cite{I}, \cite{IKR} the topology on $V^{\otimes k}$ is described in terms of the natural inner products on the $H_s^{\otimes k}$, $s\in S$. The fact that the continuity assumption (2) of Theorem \ref{nuclear} coincides with what is written in \cite[Vol. II, Theorem 2.1]{BK} follows from a well-known consequence of the nuclearity assumption: namely that all cross norms on $V^{\otimes k}$ define the same topology; see \cite[Chapitre 2, Th\'eor\`eme 8]{Gr} or \cite[Theorem 50.1]{Tre67}.
\item Condition (2) of Theorem \ref{nuclear} is equivalent to the assumption in \cite{I} and \cite{IKR} that $m \in \mathcal{F}(V')$. This is nothing but a short way to express the assumption in~\cite{BK} and~\cite{BS} on the starting sequence $m=(m^{(n)})_{n\in\NN_0}$ to be such that each $m^{(n)}\in\left(V^{\otimes n}\right)'$ is a symmetric functional in its $n-$variables. Conditions (2) and (3) combined are equivalent to the so-called \it determining \rm condition in \cite{I} and \cite{IKR} (resp. \it definiteness \rm condition in \cite{BK} and \cite{BS}).
\item $L$ extends by continuity to $L: \bigoplus_{k=0}^{\infty} \overline{S(V)}_k \rightarrow \mathbb{R}$, where here $\overline{S(V)}_k$ denotes the completion of $(S(V)_k, \overline{\tau}_k)$.  As pointed out in \cite{BK}, \cite{BS}, \cite{I}, the Radon measure $\mu$ obtained actually satisfies
$L(f) = \int \hat{f} d\mu$ $\forall$ $f \in \bigoplus_{k=0}^{\infty} \overline{S(V)}_k$. Note that here $ \bigoplus_{k=0}^{\infty} \overline{S(V)}_k$ denotes the algebraic direct sum and not the topological one used for Fock spaces.
\item The proof of Theorem \ref{nuclear} shows that the measure $\mu$ is supported by $H_{-s}$, the Hilbert space dual of $H_s$, for some index $s\in S$ depending on $L$, see \cite[Remark~1, Page 72]{BK}.  Observe that $H_{-s}$ is a countable increasing union of closed balls. Since each such ball is compact in the weak topology, by the Banach-Alaoglu theorem \cite[Theorem 3.15]{Ru}, it follows that $H_{-s}$ is a Borel set in $V^*$.
\item In general it is not known if the measure $\mu$ is unique. If the topological dual~$V'$ of $(V,\tau)$ is a Suslin space then the measure $\mu$ is certainly unique \cite[Theorem 3.6]{I}. However, it is possible to show that $\mu$ is the unique representing measure whose support has the properties described in (5). For more details about the uniqueness of the representing measure in this case see \cite[Section 3]{IKM}. 
\item There is no harm in assuming that the elements of $E$ are chosen to be linearly independent over $\mathbb{R}$, say $E = \{ x_1, x_2, \dots\}$.
Then one can show $$m_k = \sqrt{\sup_{i\ge 1} \{|L(x_i^{2k})|\}},\ \forall\, k\ge 1.$$ This follows from the assumption $L(\sum S(V)^2) \subseteq [0,\infty)$, using the Hurwitz-Reznick theorem. Indeed, by homogenizing \cite[Corollary 3.1.11]{G} in the obvious way, one gets that $$\sum_{i=1}^n \alpha_i x_i^{2k} \pm 2k\prod_{i=1}^n x_i^{\alpha_i}$$ is a sum of squares, so
 $$\left|L(\prod_{i=1}^n x_i^{\alpha_i})\right| \le \max\{ L(x_1^{2k}), \dots, L(x_n^{2k})\}.$$ Here, $\alpha_1, \dots, \alpha_n$ are arbitrary integers satisfying $\alpha_i \ge 0$ and $\sum\limits_{i=1}^n \alpha_i = 2k$.
\item Corollary \ref{sym} is simultaneously more general and less general than Theorem~\ref{nuclear}. It is more general because $(V, \tau)$ can be any locally convex topological space (not just a separable nuclear space) and the result holds for arbitrary $2d$-powers (not just squares).
It is less general because it is necessary to assume that $L : S(V) \rightarrow \mathbb{R}$ is $\overline{\tau}$-continuous.
\item The assumption that $L$ is $\overline{\tau}$-continuous is very strong. It implies not only that the restriction of $L$ to $S(V)_k$ is $\overline{\tau}_k$-continuous, for each $k\ge0$, but also the following strong form of quasi-analyticity: By Lemma \ref{lmc2} there exists $\rho \in \mathcal{S}$ and $C>0$ such that
 $|L(f)|\le C \overline{\rho}(f)$ for all $f\in S(V)$. Then, for any $k\ge 1$ and any $f_1,\dots, f_k \in V$, if $\rho(f_i)\le 1$, $i=1,\dots,k$, then $|L(f_1 \cdots f_k)| \le C \overline{\rho}(f_1 \cdots f_k) \le C \rho(f_1)\cdots \rho(f_k) \le C.$
\item In particular, if $L$ is $\overline{\tau}$-continuous and $(V,\tau)$ is separable, then conditions (2) and (3) of Theorem \ref{nuclear} hold. The fact that condition (2) holds is clear.
Separability implies there exists a countable dense subset $W$ of $V$. Using the previous remark and taking \[
    E:=\{f \in \operatorname{span}_{\mathbb{Q}}W : \rho(f)\leq1\}.
\]
we get the non-empty, countable subset of $V$ fulfilling condition (3). Note that in this case the sequence $\{m_k\}_{k=0}^\infty$ is bounded above by a constant and so the corresponding class $C\{m_k\}$ is quasi-analytic.
\item Corollary~\ref{sym} provides on one hand better information about the support of the representing measure than does Theorem~\ref{nuclear}, on the other hand it covers only to the case of compactly supported measures while this restriction does not appear in Theorem~\ref{nuclear}.
An improvement of both results in this regard is proved in \cite[Theorem 2.3]{IKR} in the special case when $V = \mathcal{C}_c^{\infty}(\mathbb{R}^d)$, the set of all infinitely differentiable functions with compact support contained in $\mathbb{R}^d$. Note that in \cite[Theorem~2.3]{IKR} the functional $L$ is not assumed to be $\bar{\tau}-$continuous and the support of the representing measure is not necessarily compact. Moreover, \cite[Theorem~2.3]{IKR} holds for arbitrary quadratic modules and not just squares as Theorem~\ref{nuclear}.
\end{enumerate}
\end{rem}

\section*{Acknowledgments}
The work of M. Infusino was partially supported by a Marie Curie fellowship of the Istituto Nazionale di Alta Matematica (Grant PCOFUND-GA-2009-245492). The work of M. Marshall was supported by the Natural Sciences and Engineering Research Council of Canada (Grant NSERC: 7854-2013). The authors would like to thank Tobias Kuna and Eli Shamovich for the helpful discussions and the anonymous referee for her or his
valuable suggestions.


\begin{thebibliography}{99}
\bibitem{AJK} D. Alpay, P. Jorgensen, D. Kimsey. Moment problems in an infinite number of variables. {\em Infin. Dimens. Anal. Quantum Probab. Relat. Top.}, {\bf 18} (2015) no.4, 1550024, 14 pp.
\bibitem{BNS} E. Beckenstein, L. Narici, C. Suffel, Topological algebras, North-Holland Math. Stud., {\bf 24}, {\it North-Holland Publishing Co., Amsterdam-New York-Oxford}, 1977.
\bibitem{BS-KD} E. Becker, N. Schwartz, Zum Darstellungssatz von Kadison-Dubois. \textit{Arch. Math. (Basel)} {\bf 40} (1983), no. 5, 421--428 .
\bibitem{BK}
Y.~M. Berezansky, Y.~G. Kondratiev, Spectral methods in infinite-dimensional analysis. Translated from the 1988 Russian original by P. V. Malyshev and D. V. Malyshev and revised by the authors. {Mathematical Physics and Applied Mathematics} {\bf 12}. {\it Kluwer Academic Publishers, Dordrecht, 1995.}
\bibitem{BS}
Y.~M. Berezansky, S.~N. {\v{S}}ifrin, A generalized symmetric power moment problem. (Russian)
\textit{Ukrain. Mat. \v Z.} {\bf 23} (1971), 291--306.
\bibitem{BCR} C. Berg, J.P.R. Christensen, P. Ressel, Positive definite functions on abelian semigroups, \textit{Math. Ann.} {\bf 223} (1976), no. 3, 253--274.
\bibitem{Bor-Yng75}
H.~J. Borchers, J. Yngvason,
\newblock Integral representations for {S}chwinger functionals and the moment problem over nuclear spaces
\newblock {\em Comm. Math. Phys.} \textbf{43} (1975), no. 3, 255--271.
\bibitem{F} V. Fock, Konfigurationsraum und Zweite Quantelung, \textit{Zeitschrift für Physik} {\bf 75} (1932), 622--647.
\bibitem{G} M. Ghasemi, Polynomial Optimization and Moment Problem, Doctoral Thesis, University of Saskatchewan, 2012.
\bibitem{MGHAS} M. Ghasemi, S. Kuhlmann, Closure of the cone of
sums of 2d-powers in real topological algebras, \textit{J. Funct.
Anal.}, {\bf 264} (2013), no.1, 413--427. 
\bibitem{MGES} M. Ghasemi, S. Kuhlmann, E. Samei, The Moment
Problem for Continuous Positive Semidefinite Linear Functionals,
\textit{Archiv der Math.}, {\bf 100} (2013), no.1, 43--53. 
\bibitem{GKM} M. Ghasemi, S. Kuhlmann, M. Marshall, Application of Jacobi's representation theorem to locally multiplicatively convex topological
$\mathbb{R}$-algebras, \textit{J. Funct. Anal.} {\bf 266}  (2014), no. 2,
1041--1049.
\bibitem{GKM2} M. Ghasemi, S. Kuhlmann, M. Marshall, Moment problem in infinitely many variables, {\it Isr. J. Math.} {\bf 212} (2016), no.2, 989--1012. 
\bibitem{GMW} M. Ghasemi, M. Marshall, Sven Wagner, Closure of the cone of sums of $2d$-powers in certain weighted $\ell_1$-seminorm topologies, \textit{Canad. Math. Bull.} {\bf 57} (2014), no. 2, 289--302.
\bibitem{Gr} A. Grothendieck, Produits tensoriels topologiques et espaces nucl\'eaires. \textit{Mem. Amer. Math. Soc.} {\bf 16} (1955), 140 pp.
\bibitem{Heg75}
G.~C. Hegerfeldt,
\newblock Extremal decomposition of Wightman functions and of states on nuclear *-algebras by Choquet theory, 
\newblock {\em Comm. Math. Phys.} \textbf{45} (1975), no. 2, 133--135.
\bibitem{I} M. Infusino, Quasi-analyticity and determinacy of the full moment problem from finite to infinite dimensions, in \emph{Stochastic and Infinite Dimensional Analysis}, {\bf Chap.\!~9}: 161--194, Trends in Mathematics,  Birkh\"auser, 2016. 
\bibitem{IK} M. Infusino, S. Kuhlmann, {Infinite dimensional moment problem: open questions and applications}, in \emph{Ordered Algebraic Structures and Related Topics}, Contemporary Mathematics, \textbf{697}: 187--201, Amer. Math. Soc., Providence, RI, 2017. 
\bibitem{IKM} M. Infusino, S. Kuhlmann, M. Marshall, On the determinacy of the moment problem for symmetric algebras of a locally convex space, {\it Operator Theory:
Advances and Applications} {\bf 262} (2018), 243--250.
\bibitem{IKR} M. Infusino, T. Kuna, A. Rota, The full infinite dimensional moment problem on semi-algebraic sets of generalized functions, \textit{J. Funct. Anal.}  {\bf267} (2014), no. 5, 1382--1418.
\bibitem{J} T. Jacobi, A representation theorem for certain partially ordered commutative rings, \textit{Math. Z.} {\bf237} (2001), no.2, 259--273.
\bibitem{J-P} T. Jacobi, A. Prestel, Distinguished representations of strictly positive polynomials, \textit{J. reine angew. Math.} {\bf 532} (2001), 223--235.
\bibitem{K}  J.L. Krivine, Anneaux pr\'eordonn\'es. \textit{J. Analyse Math.} {\bf 12} (1964), 307--326.
\bibitem{L} J.B. Lasserre, The K-moment problem for continuous functionals, \textit{Trans. Amer. Math. Soc.} {\bf 365} (2013), no.5, 2489--2504.
\bibitem{Mal} A. Mallios, Topological algebras. Selected topics, North Holland Math. Stud. {\bf 124},  \textit{Elsevier Sci. Publ., Amsterdam}, 1986.
\bibitem{M0} M. Marshall, A general representation theorem for partially ordered commutative rings. \textit{Math. Z.} {\bf242} (2002), no. 2, 217--225. 
\bibitem{M} M. Marshall, Positive polynomials and sums of squares, \textit{Mathematical Surveys and Monographs} {\bf 146}. Amer. Math. Soc., Providence, RI, 2008.
\bibitem{Mi} E.A. Michael, Locally multiplicatively convex topological algebras, \textit{Mem. Amer. Math. Soc.} {\bf 11}, 1952.
\bibitem{P} M. Putinar, Positive polynomials on compact semi-algebraic sets, \textit{Ind. Univ. Math. J.} {\bf 42} (1993), no. 3, 969--984.
\bibitem{RS} M. Reed, B. Simon, Methods of modern mathematical physics. I.  Functional analysis. Second edition, {\it Academic Press, Inc., New York,} 1980. 
\bibitem{Ru} W. Rudin, Functional Analysis, Second Edition, \textit{McGraw-Hill Co}, 1991.
\bibitem{Schae} H.H. Schaefer, Topological vector spaces, Graduate Texts in Mathematics, Vol. 3. {\it Springer-Verlag}, 1971. 
\bibitem{Schmu78} K.~Schm\"{u}dgen, Positive cones in enveloping algebras, {\em Rep. Math. Phys. } \textbf{14} (1978), no. 3, 385--404. 
\bibitem{Schmu90} K.~Schm\"{u}dgen,
\newblock {Unbounded operator algebras and representation theory,}
\newblock Operator Theory: Advances and Applications 37, {\em Birkh\"auser Verlag}, Basel, 1990.
\bibitem{Tre67} F. Tr{\`e}ves, Topological vector spaces, distributions and kernels, \textit{Academic Press, New York,} 1967.
\bibitem{Val} M. Valdivia, Nuclearity and Banach spaces, {\it Proceedings of the Edinburgh Mathematical Society}, {\bf 20} (1977), no.3, 205--209.
\bibitem{W} S. Willard, General Topology, \textit{Addison-Wesley Publishing Co., Reading, Mass.-London-Don Mills, Ont.}, 1970.
\end{thebibliography}
\end{document}